\newtheorem{theorem}{Theorem}[section]
\newtheorem{lemma}[theorem]{Lemma}
\newtheorem{proposition}[theorem]{Proposition}
\newtheorem{corollary}[theorem]{Corollary}
\theoremstyle{definition}
\newtheorem{definition}[theorem]{Definition}
\theoremstyle{remark}
\newtheorem{remark}[theorem]{Remark}
\numberwithin{equation}{section}
\newtheorem{question}[theorem]{Question}
\begin{document}

\title[L\'evy-Khinchin decomposition]{On the L\'evy-Khinchin decomposition of generating functionals}

\author{Uwe Franz}
\address{U.F., D\'epartement de math\'ematiques de Besan\c{c}on,
Universit\'e de Franche-Comt\'e, 16, route de Gray, 25 030
Besan\c{c}on cedex, France}
\email{uwe.franz@univ-fcomte.fr}
\urladdr{http://lmb.univ-fcomte.fr/uwe-franz}
\thanks{U.F.\ was supported by an ANR Project OSQPI (ANR-11-BS01-0008) and by the Alfried Krupp Wissenschaftskolleg Greifswald as senior fellow.} 

\author{Malte Gerhold}
\address{M.G., Institut f\"ur Mathematik und Informatik \\ 
Ernst Moritz Arndt Universit\"at 
Greifswald\\Walther-Rathenau-Stra\ss{}e 47 \\
17487 Greifswald \\ Germany}
\email{mgerhold@uni-greifswald.de}
\urladdr{www.math-inf.uni-greifswald.de/index.php/mitarbeiter/282-malte-gerhold}

\author{Andreas Thom}
\address{A.T., FB Mathematik, Institut f\"ur Geometrie\\ 
TU Dresden \\ 01062 Dresden}
\email{andreas.thom@tu-dresden.de}
\urladdr{http://tu-dresden.de/mathematik/geometrie/thom}
\thanks{A.T.\ was supported by ERC Starting Grant No.\ 277728.}

\subjclass[2000]{43A35,81R50,60E99}

\keywords{Conditionally positive definite functions, Sch\"urmann triples, L\'evy-Khinchin decomposition, surface groups}

\begin{abstract}
We study several sufficient conditions for the existence of a L\'evy-Khinchin decomposition of generating functionals on unital involutive algebras with a fixed character. We show that none of these conditions are equivalent and we show that such a decomposition does not always exist.
\end{abstract}

\maketitle

\section{Introduction}

Convolution semigroups of probability measures on locally compact abelian groups have semigroups of positive definite functions on the dual group as Fourier transform and can therefore be classified by conditionally positive definite functions on the dual group. In these classifications, first obtained for the real line by Khinchin and L\'evy in the 1930's, the conditionally positive definite functions are written as sum of a quadratic or Gaussian part and an integral part which does not contain a (non-degenerate) Gaussian part. We will call such a decomposition a L\'evy-Khinchin decomposition, see Definition \ref{def-lk}. There exist similar classifications and decompositions on general locally compact abelian groups, cf.\ \cite{guichardet72a,parthasarathy+schmidt72}.

In the characterization of convolution semigroups of probability measures on
possibly noncommutative Lie groups, Hunt \cite{hunt56} replaced
conditional positive definite functions by generating functionals or
generators of the associated Markov semigroup. They are again a sum of a quadratic or Gaussian part and an integral part that corresponds to the jumps of the associated L\'evy process.

Sch\"urmann \cite{schuermann90} \cite[Chapter 5]{schuermann93} investigated if such a decomposition is also possible in the still more general setting of L\'evy processes on involutive bialgebras. Here one would
like to characterize generating functionals, i.e., linear functionals on a unital $*$-algebra that are hermitian, positive on the kernel of a character $\varepsilon\colon A\to
\mathbb{C}$, and vanish on the unit, see Definition \ref{def-cpf}. Sch\"urmann introduced several cohomological
conditions on such pairs $(A,\varepsilon)$ that guarantee that any generating
functional on $(A,\varepsilon)$ can be decomposed into a Gaussian part and a
purely non-Gaussian part. By Sch\"urmann's generalization of Schoenberg's correspondence, generating functionals on involutive bialgebras are in one-to-one correspondence with convolution semigroups of states and therefore classify L\'evy processes, see \cite{schuermann93}. Note that generating functionals are a generalization of conditionally positive functions on groups (which are also known --- up to the sign --- as functions of negative type). If $A=\mathbb{C}G$ is the $*$-algebra of a group $G$ and $\varepsilon$ the linear extension of the trivial representation, then a hermitian functional $\psi\colon\mathbb{C}G\to\mathbb{C}$ with normalization $\psi(\mathbf{1})=0$ is a generating functional if and only if $\psi|_G$ is conditionally positive (or $-\psi|_G$ is a function of negative type).

Sch\"urmann showed that the L\'evy-Khinchin decomposition of a generating functional is always possible if $A$ is a commutative $*$-bialgebra, and that it is
also possible for any generating functional on the Brown-Glockner-von
Waldenfels $*$-algebra generated by $n^2$ elements satisfying the unitarity
relations.

In this paper we continue Sch\"urmann's study and show that none of the sufficient cohomological conditions appearing in his work are equivalent and that none of them are necessary for the existence of a L\'evy-Khinchin decomposition of arbitrary generating functionals on a given pair $(A,\varepsilon)$. We also show that there exist pairs $(A,\varepsilon)$ with generating functionals that do not admit such a decomposition.

Our approach is based on an exact sequence obtained by Netzer and Thom
\cite{netzer+thom13} for group algebras, which allows to characterize the
existence and uniqueness of a generating functional for a given cocycle in
terms of the first and second homology groups, see Theorem \ref{nt-sequence},
Lemma \ref{nt-lemma}, and the discussion in Remark \ref{nt-remark}.

The paper is organized as follows. In Section \ref{sec-prelim} we recall the relevant definitions of generating functionals, Sch\"urmann triples, Gaussianity and L\'evy-Khinchin decompositions. In Section \ref{sec-hoch} we recall the Hochschild (co-)homology for associative algebras and state a generalization of the exact sequence from \cite[Lemma 5.6]{netzer+thom13}, as well as a dual version. This allows us to give a new answer to the question of existence and uniquess of a generating functional for a given pair $(\pi,\eta)$, where $\pi$ is a $*$-representation and $\eta$ a $\pi$-$\varepsilon$-cocycle, see Remark \ref{nt-remark}. Finally, in Section \ref{sec-examples}, we give an example of a generating functional that does not admit a L\'evy-Khinchin decomposition and we provide examples that show that the conditions (LK), (NC), (GC), (AC), and (H$^2$Z) to be introduced in Section \ref{sec-prelim} are not equivalent.

The following diagram summarizes the relations between the conditions we study in this paper:
\begin{equation}\label{diagram}
\xymatrixcolsep{3pc}\xymatrix{
&&{\rm (NC)} \ar@{=>}[dr]&&\\
{\rm (H^2Z)} \ar@{=>}[r]&{\rm (AC)} \ar@{=>}[dr] \ar@{=>}[ur]&& \hspace*{-2em}{\rm (GC)} \vee {\rm (NC)} \ar@{=>}[r]&{\rm (LK)}\\
&&{\rm (GC)} \ar@{=>}[ur]&&
}
\end{equation}
None of the converse implications holds in general. Our counter-examples are constructed on group algebras, except for Example \ref{exa-not2=0}, which shows that ${\rm (H^2Z)}$ and (AC) are not equivalent. We do not know if ${\rm (H^2Z)}$ and (AC) are equivalent under additional assumptions that are verified by group algebras, such as the existence of a faithful state.

In this paper we call the decomposition of a conditionally positive function or a generating functional into a Gaussian part and a purely non-Gaussian part a
\emph{L\'evy-Khinchin decomposition}. Such a decomposition is related to the decomposition of the associated L\'evy processes into a Gaussian part and a jump part, which is known as \emph{L\'evy-It\^o decomposition} in probability theory. While the classification and the
decomposition of conditionally positive functions or generating functionals
can be studied using only the $*$-algebra structure of $A$ and the character
$\varepsilon\colon A\to\mathbb{C}$, the reconstruction and decomposition of the
associated L\'evy processes depends also on the coalgebra structure, and will
be studied elsewhere.

\section{Generating functionals, Sch\"urmann triples, Gaussianity, etc.}\label{sec-prelim}

Throughout this paper, $A$ will be a unital associative involutive algebra
over the field of complex numbers and $\varepsilon\colon A\to\mathbb{C}$ a non-zero
$*$-homomorphism (also called a \emph{character}).

\begin{definition}\label{def-cpf}
We say that a linear functional $\psi\colon A\to\mathbb{C}$ is a \emph{generating functional on $(A,\varepsilon)$} if
\begin{enumerate}[label=\textnormal{(\roman*)}]
\item $\psi(1)=0$;
\item $\psi$ is hermitian, i.e., $\psi(a^*)=\overline{\psi(a)}$
  for $a\in A$;
\item $\psi$ is positive on ${\rm ker}(\varepsilon)$, i.e.,
  $\psi(a^*a)\ge 0$ for $a\in{\rm ker}(\varepsilon)$.
\end{enumerate}
\end{definition}

For a pre-Hilbert space $D$ we denote by $L(D)$ the $*$-algebra of adjointable
linear operators on $D$. See Section \ref{sec-hoch} for the definition of cocycles and coboundaries.
\begin{definition}
A triple $(\pi\colon A\to L(D),\eta\colon A\to D,\psi\colon A\to \mathbb{C})$ of linear maps is
called a \emph{Sch\"urmann triple on $(A,\varepsilon)$ over $D$} if
\begin{enumerate}[label=\textnormal{(\roman*)}]
\item $\pi$ is a unital $*$-representation,
\item $\eta$ is a $\pi$-$\varepsilon$-cocycle, i.e., we have
\[
\eta(ab) = \pi(a)\eta(b) + \eta(a)\varepsilon(b), \qquad a,b\in A,
\]
\item $\psi$ is a hermitian linear functional that has
  \[
  A\otimes A \ni (a\otimes b) \mapsto
  -\langle\eta(a^*),\eta(b)\rangle\in \mathbb{C}
\]
as coboundary, i.e., we have
\[
\varepsilon(a)\psi(b)-\psi(ab)+\psi(a)\varepsilon(b) = -\langle\eta(a^*),\eta(b)\rangle, \qquad a,b\in A.
\]

\end{enumerate}
\end{definition}
One can show that the hermitian functional $\psi$ in a Sch\"urmann triple is a generating functional. We call a Sch\"urmann triple \emph{surjective}, if $\eta\colon A\to D$ is surjective.

We will denote the linear map $A\otimes A \ni (a\otimes b) \mapsto
\langle\eta(a^*),\eta(b)\rangle\in \mathbb{C}$ by $\mathcal{L}(\eta)$. 

From a given generating functional $\psi\colon A\to\mathbb{C}$ one can
construct a Sch\"urmann triple $(\pi,\eta,\psi)$ via a GNS-type construction,
see \cite[Theorem 2.3.4]{schuermann93} or \cite[Section 4.4]{franz+schott99}.

A central problem in our paper is to determine for a given pair $(\pi,\eta)$ of a unital $*$-representation $\pi$ and a $\pi$-$\varepsilon$-cocycle $\eta$, if there exists a functional $\psi$ that makes $(\pi,\eta,\psi)$ a Sch\"urmann triple. Note that $(\pi,\eta)$ almost determines $\psi$, if the latter exists. More precisely, if $(\pi,\eta,\psi)$ is a Sch\"urmann triple on $(A,\varepsilon)$ and $\psi'$ is a Hermitian linear functional, then $(\pi,\eta,\psi')$ is a Sch\"urmann triple if and only if $d:=\psi-\psi'$ is a \emph{derivation}, i.e., $d(ab)=\varepsilon(a)d(b)+d(a)\varepsilon(b)$ for $a,b\in A$.

We use the notation $K_1$ for the kernel of $\varepsilon$ and define furthermore
\[
K_n= \operatorname{span}\{a_1\cdots a_n: a_1,\ldots,a_n\in K_1\}
\]
for $n\ge 2$. Since $\varepsilon$ is a $*$-homomorphism, we get a descending chain of $*$-ideals. In particular, we have $K_{n+1}\subseteq K_n$ for $n\in\mathbb{N}$.

\begin{definition}
A $\pi$-$\varepsilon$-cocycle on $(A,\varepsilon)$ is called \emph{Gaussian}
if it is a derivation, i.e., if
\[
\eta(ab) = \varepsilon(a)\eta(b) + \eta(b)\varepsilon(b)
\]
for $a,b\in A$.

A generating functional $\psi$ on
$(A,\varepsilon)$ is called \emph{Gaussian} if $\psi|_{K_3}=0$. This terminology is a natural generalization of the classical case. A L\'evy process with values in Euclidean space or more generally a Lie group is called Gaussian if the measure in the integral term in Hunt's formula vanishes, i.e., if its generator is a second order differential operator. In this case the counit is evaluation of a function at the origin and $K_3$ therefore consists of functions having a zero of order three at the origin. Generating functionals of L\'evy processes with values in Euclidean space or a Lie group are therefore Gaussian if and only if they vanish on $K_3$.

If $(\pi,\eta,\psi)$ is a  Sch\"urmann triple over  $(A,\varepsilon)$, then $\eta$ is Gaussian if and only if $\psi$ is Gaussian, in which case we call the Sch\"urmann triple \emph{Gaussian}.
\end{definition}

A surjective Sch\"urmann triple $(\pi,\eta,\psi)$ is Gaussian if and only if $\pi=\varepsilon\operatorname{id}_D$. See \cite{schuermann90} \cite[Chapter 5]{schuermann93} for equivalent characterizations.

Let us briefly review Sch\"urmann's construction \cite{schuermann90}
\cite[Chapter 5]{schuermann93} that allows to extract the Gaussian part of a cocycle:

Assume that $\eta\colon A\to D$ is surjective (this can always be achieved by
replacing $D$ by $\eta(A)$, if necessary). Let $H=\overline{D}$ be the completion of $D$ and denote by $P_G$, $P_R$ the orthogonal projections onto the closed subspaces
\begin{align*}
H_G &= \big\{\big(\pi(a)-\varepsilon(a)\big)v;a\in A, v\in D\}^\perp, \\
H_R &= \overline{\operatorname{span}\big\{\big(\pi(a)-\varepsilon(a)\big)v;a\in A, v\in D\}},
\end{align*}
of $H$, respectively. We define representations $\pi_G$ and $\pi_R$ on $D_G=P_GD$ and $D_R=P_RD$ by
\begin{align*}
\pi_G(a)P_G\eta(b) &= \varepsilon(a)P_G\eta(b), \\
\pi_R(a)P_R\eta(b) &= \pi(a)\eta(b) - \varepsilon(a)P_G\eta(b)=\bigl(\pi(a)-\varepsilon(a)\bigr)\eta(b)+\varepsilon(a)P_R\eta(b),
\end{align*}
for $a,b\in A$. Then $\eta_G=P_G\circ \eta$ and $\eta_R=P_R\circ \eta$ are
cocycles on $(A,\varepsilon)$ for $\pi_G$ and $\pi_R$, respectively, and $\eta_G$ is furthermore Gaussian.
Note that $D\subseteq D_G\oplus D_R$, $\eta=\eta_G+\eta_R$, and
$\pi_G\oplus\pi_R$ extends $\pi$.
We call a generating functional, a cocycle, or a Sch\"urmann triple
\emph{(purely) non-Gaussian}, if $D_G=\{0\}$. The cocycle $\eta_R$ is non-Gaussian.

Note that a generating functional is both Gaussian and purely non-Gaussian if and only if it is a derivation. Such generating functionals are \emph{trivial} from the stochastic point of view, because they correspond to a deterministic motion. If a $\psi\colon A\to\mathbb{C}$ is a hermitian derivation on an involutive bialgebra, then the associated convolution semigroup of states $\varphi_t=\exp_\star t\psi\colon A\to\mathbb{C}$, $t\ge 0$ consists of $*$-homomomorphisms, which, in the classical case, means that the $\varphi_t$ correspond to Dirac measures.

\begin{definition}\label{def-lk}
We say that a generating functional $\psi$ or a Sch\"urmann triple
$(\pi,\eta,\psi)$ admits a \emph{L\'evy-Khinchin decomposition} if there
exist generating functionals $\psi_G,\psi_R\colon A\to \mathbb{C}$ such that
$(\pi|_{D_G},\eta_G,\psi_G)$ and $(\pi|_{D_R},\eta_R,\psi_R)$ are Sch\"urmann
triples and $\psi=\psi_G+\psi_R$.

The decomposition $\psi=\psi_G+\psi_R$ or
\[
(\pi,\eta,\psi) \subseteq \big(\pi_G\oplus \pi_R,\eta_G+\eta_R,\psi_G+\psi_R\big)
\]
is called a \emph{L\'evy-Khinchin decomposition}.
\end{definition}

Observe that the condition $\psi=\psi_G+\psi_R$ is not crucial in the following sense: If there
exist generating functionals $\psi_G,\psi_R\colon A\to \mathbb{C}$ such that
$(\pi|_{D_G},\eta_G,\psi_G)$ and $(\pi|_{D_R},\eta_R,\psi_R)$ are Sch\"urmann
triples, then automatically $d:=\psi-\psi_G-\psi_R$ is a derivation. When we replace $\psi_G$ by $\psi_G+d$ (or $\psi_R$ by $\psi_R+d$) we get a L\'evy-Khinchin decomposition of $\psi$.

The central topic of this paper is the following question.

\begin{question}
Which generating functionals or Sch\"urmann triples admit a L\'evy-Khinchin decomposition?
\end{question}

We will show later that there exist generating functionals which do not admit
a L\'evy-Khinchin decomposition, cf.\ Proposition \ref{prop-not-lk}.

A positive answer is known for the following cases:
\begin{enumerate}
\item
on commutative involutive bialgebras, cf.\ \cite{schuermann90} \cite[Chapter 5]{schuermann93},
\item
on the ``Brown-Glockner-von Waldenfels bialgebra'' defined by the unitarity
relations, cf.\ \cite{schuermann90} \cite[Chapter 5]{schuermann93},
\item
on the Woronowicz quantum group $SU_q(2)$, cf.\ \cite{skeide94,schuermann+skeide98},
\item
on the compact quantum groups $SU_q(N)$ and $U_q(N)$, cf.\ \cite{franz+kula+lindsay+skeide14},
\item
for generating functionals on involutive Hopf algebras satisfying some
symmetry condition, cf.\ \cite{das+franz+kula+skalski14}.
\end{enumerate}

\subsection{The five properties: (LK), (GC), (NC), (AC), and \texorpdfstring{(H$^2$Z)}{(H2Z)}}\label{sec:four-properties:-lk}

Let $A$ be a $*$-algebra and $\varepsilon\colon A\to\mathbb{C}$ a character on $A$.

\begin{description}
\item[LK]\label{LK}
(\emph{L\'evy-Khinchin} = L\'evy-Khinchin-decomposition property)\\
We say that $(A,\varepsilon)$ has the (LK)-property if for any generating functional $\psi$ there exist generating functionals $\psi_G$ and $\psi_R$ associated to the Gaussian part $\eta_G$ and the ``remainder'' part $\eta_R$ of the cocycle $\eta$ of $\psi$, i.e., all generating functionals admit a L\'evy-Khinchin decomposition.
\item[GC]
(\emph{Gaussians complete} = Gaussian cocycles can be completed to a triple)\\
We say that $(A,\varepsilon)$ has the (GC)-property if any Gaussian cocycle $\eta\colon A\to D$ can be completed to a Sch\"urmann triple $(\varepsilon\operatorname{id}_D,\eta,\psi)$.
\item[NC]
(\emph{Non-Gaussians complete} = Cocycles without Gaussian part can be completed to a triple)\\
We say that $(A,\varepsilon)$ has the (NC)-property if any pair $(\pi,\eta)$ with $\pi$ a unital $*$-representation and $\eta$ a $\pi$-$\varepsilon$-cocycle with $D_G=\{0\}$ can be completed to a Sch\"urmann triple $(\pi,\eta,\psi)$.
\item[AC]
(\emph{All complete} = All cocycles can be completed to a triple)\\
We say that $(A,\varepsilon)$ has the (AC)-property if any pair $(\pi,\eta)$ with $\pi$ a unital $*$-representation and $\eta$ a $\pi$-$\varepsilon$-cocycle can be completed to a Sch\"urmann triple $(\pi,\eta,\psi)$.
\item[H$^2$Z]
(\emph{Second cohomology zero} = the second cohomology with trivial coefficients vanishes) \\
We say that $(A,\varepsilon)$ has the (H$^2$Z)-property if $H^2(A,\mathbb{C})=\{0\}$. See the next Section for the definition of the second cohomology $H^2(A,\mathbb{C})$.
\end{description}

\begin{remark}
In \cite{schuermann90} and \cite[Chapter 5]{schuermann93} the properties (LK), (GC), and (AC) are called (C), (C'), and (D), respectively.
\end{remark}

It is clear that (AC) implies (GC) and (NC), furthermore (GC)$\vee$(NC) implies (LK), see \cite{schuermann90}, \cite[Chapter 5]{schuermann93}. In the next Section we show that (H$^2$Z) implies (AC), see Remark \ref{nt-remark}.

\section{Hochschild (co-)homology}\label{sec-hoch}

Let $M$ be an $A$-bimodule and put $$C^n(A,M):=L(A^{\otimes n},M) = \{\phi\colon A^{\otimes n}\to M; \phi \mbox{ linear}\}.$$ Together with the coboundary operator $\partial\colon C^{n-1}(A,M)\to C^n(A,M)$,
\begin{multline*}
	\partial \phi (a_1\otimes\dots\otimes a_{n}):=
		a_1.\phi(a_2\otimes\dots\otimes a_{n})+\sum_{i=1}^{n-1} (-1)^i~\phi\big(a_1\otimes \dots \otimes (a_ia_{i+1})\otimes \dots\otimes a_{n}\big)\\
        +(-1)^{n}~\phi(a_1\otimes \dots \otimes a_{n-1}).a_{n}
\end{multline*}
this is a cochain complex, it is called the \emph{Hochschild complex} of $M$. The elements of $C^n(A,M)$ are called \emph{($n$-)cochains}. A cochain $\phi$ is called a \emph{cocycle} if $\partial\phi=0$ and a \emph{coboundary} if there exists a cochain $\psi$ with $\phi=\partial\psi$. We denote by $Z^n(A,M)$ the set of all $n$-cocycles, by $B^n(A,M)$ the set of all $n$-coboundaries and by $H^n(A,M):=Z^n(A,M)/B^n(A,M)$ the $n$th cohomology. 

In our context, the bimodule is usually a pre-Hilbert space $D$ with left action given by a unital $*$-representation $\pi$ of $A$ and right action given by the character $\varepsilon$, i.e.,
\[
a.v.b=\pi(a)v\varepsilon(b).
\]
In that case, we speak of $\pi$-$\varepsilon$-cocycles. An important special case is $D=\mathbb{C}$ and $\pi=\varepsilon$, because the generating functionals take values in $\mathbb{C}$.

In this terminology, a derivation is an $\varepsilon$-$\varepsilon$-cocycle.

There is also a notion of Hochschild homology. It will only appear here for the bimodule $\mathbb{C}$ with left and right action implemented by $\varepsilon$. In this case, the chain complex consists of the spaces $C_n=A^{\otimes n}$ which are the pre-duals of $C^n$ as vector spaces and the boundary operator $d\colon C_{n}\to C_{n-1}$ is given by
\begin{multline*}
  d(a_1\otimes\cdots\otimes a_{n})=\varepsilon(a_1)a_2\otimes\cdots
  \otimes a_n+\sum_{i=1}^{n-1}a_1\otimes\cdots\otimes
  a_ia_{i+1}\otimes\cdots\otimes
  a_{n}\\
  +(-1)^{n}~\phi(a_1,\dots,a_{n-1})\varepsilon(a_{n}).
\end{multline*}
Evidently, $\partial$ is the transpose of $d$, i.e., $\partial\phi=\phi\circ d$. The cycles, boundaries and homology groups are denoted by $Z_n(A,\mathbb{C})$, $B_n(A,\mathbb{C})$ and $H_n(A,\mathbb{C})$ respectively.

\begin{proposition}\label{prop:L(eta)-cocycle}
Let $\eta$ be a cocycle on $(A,\varepsilon)$.

The map $\mathcal{L}(\eta)\colon A\otimes A \to \mathbb{C}$,
$\big(\mathcal{L}(\eta)\big)(a\otimes b)=\langle\eta(a^*),\eta(b)\rangle$ is a
$\varepsilon$-$\varepsilon$-$2$-cocycle.
\end{proposition}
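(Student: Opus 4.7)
The plan is to verify directly that $\partial \mathcal{L}(\eta) = 0$ by substituting the cocycle identity for $\eta$ into the coboundary formula and watching the four resulting terms cancel in pairs.

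First I would write out what the coboundary looks like in our specific situation. For $\phi \in C^2(A,\mathbb{C})$ with the $\varepsilon$-$\varepsilon$-bimodule structure on $\mathbb{C}$, the general formula from Section \ref{sec-hoch} specializes to
\[
\partial\phi(a\otimes b\otimes c) = \varepsilon(a)\phi(b\otimes c) - \phi(ab\otimes c) + \phi(a\otimes bc) - \phi(a\otimes b)\varepsilon(c).
\]
So I need to show that
\[
\varepsilon(a)\langle\eta(b^*),\eta(c)\rangle - \langle\eta((ab)^*),\eta(c)\rangle + \langle\eta(a^*),\eta(bc)\rangle - \langle\eta(a^*),\eta(b)\rangle\varepsilon(c) = 0.
\]

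Next I would expand the two middle terms using the cocycle identity applied to $\eta((ab)^*) = \eta(b^*a^*)$ and to $\eta(bc)$. This gives
\[
\eta(b^*a^*) = \pi(b^*)\eta(a^*) + \eta(b^*)\varepsilon(a^*), \qquad \eta(bc) = \pi(b)\eta(c) + \eta(b)\varepsilon(c).
\]
Taking inner products, and using that $\pi$ is a $*$-representation (so $\pi(b^*) = \pi(b)^*$ and hence $\langle \pi(b^*)\eta(a^*),\eta(c)\rangle = \langle\eta(a^*),\pi(b)\eta(c)\rangle$) together with the fact that $\varepsilon$ is a $*$-character (so $\overline{\varepsilon(a^*)} = \varepsilon(a)$ pulls out of the first slot as $\varepsilon(a)$), I obtain
\[
\langle\eta((ab)^*),\eta(c)\rangle = \langle\eta(a^*),\pi(b)\eta(c)\rangle + \varepsilon(a)\langle\eta(b^*),\eta(c)\rangle,
\]
\[
\langle\eta(a^*),\eta(bc)\rangle = \langle\eta(a^*),\pi(b)\eta(c)\rangle + \varepsilon(c)\langle\eta(a^*),\eta(b)\rangle.
\]

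Finally, substituting these two expansions back into $\partial \mathcal{L}(\eta)(a\otimes b\otimes c)$, the term $\varepsilon(a)\langle\eta(b^*),\eta(c)\rangle$ cancels with the $\varepsilon(a)$-piece coming from $\langle\eta((ab)^*),\eta(c)\rangle$, the two $\langle\eta(a^*),\pi(b)\eta(c)\rangle$ terms cancel against each other, and the $\varepsilon(c)\langle\eta(a^*),\eta(b)\rangle$ piece from $\langle\eta(a^*),\eta(bc)\rangle$ cancels with the last term. So $\partial\mathcal{L}(\eta) = 0$, proving $\mathcal{L}(\eta) \in Z^2(A,\mathbb{C})$.

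There is no real obstacle here, only bookkeeping: the main thing to keep track of is the antilinearity of the inner product in the first slot, which is precisely what converts $\varepsilon(a^*)$ into $\varepsilon(a)$ and allows the $\pi(b^*)$ to migrate across as $\pi(b)$, making the cancellation work out cleanly. The whole argument is really just the observation that both the $*$-representation property of $\pi$ and the $*$-character property of $\varepsilon$ conspire so that the adjoint involved in $\mathcal{L}(\eta)(a\otimes b)=\langle\eta(a^*),\eta(b)\rangle$ is compatible with the Hochschild differential.
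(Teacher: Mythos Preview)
Your proof is correct and follows essentially the same direct computation as the paper: write out $\partial\mathcal{L}(\eta)(a\otimes b\otimes c)$, expand $\eta(b^*a^*)$ and $\eta(bc)$ via the cocycle identity, and use that $\pi$ is a $*$-representation and $\varepsilon$ a $*$-character to make the four terms cancel in pairs. The paper additionally remarks that this is a special case of the cup-product, but the explicit verification it gives is exactly yours.
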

\begin{proof}
This is a special case of the so-called cup-product, a direct proof is as follows:
\begin{align*}
\partial\bigl(\mathcal{L}&(\eta)\bigr)(a\otimes b\otimes c) \\
&=\varepsilon(a) \langle\eta(b^*),\eta(c)\rangle -
\bigl\langle\eta\bigl((ab)^*\bigr),\eta(c)\bigr\rangle +
\langle\eta(a^*),\eta(bc)\rangle -
\langle\eta(a^*),\eta(b)\rangle\varepsilon(c) \\
&=\varepsilon(a) \langle\eta(b^*),\eta(c)\rangle -
\bigl\langle\bigl(\pi(b)^*\eta(a^*)+ \eta(b^*)\overline{\varepsilon(a)}
\bigr),\eta(c)\bigr\rangle \\
&\quad+ \bigl\langle\eta(a^*),\bigl(\pi(b)\eta(c)
+ \eta(b)\varepsilon(c)\bigr)\bigr\rangle -
\langle\eta(a^*),\eta(b)\rangle\varepsilon(c) \\
&= 0
\end{align*}
for all $a,b,c\in A$.
\end{proof}

If $\eta\colon A\to D$ is a coboundary, i.e., if there exists a vector $v\in D$
such that
\[
\eta(a) = \big(\pi(a)-\varepsilon(a)\big) v \qquad \mbox{ for } a\in A,
\]
then $\mathcal{L}(\eta)$ is also a coboundary. In that case we have
\[
\mathcal{L}(\eta) = - \partial \phi_v
\]
with
\[
\phi_v(a) = \big\langle v, \big(\pi(a)-\varepsilon(a)\big) v\big\rangle \qquad
\mbox{ for } a\in A.
\]

The following theorem gives a new answer to the question of existence and
uniqueness of a generating functional to a given cocycle. 
\begin{theorem}\label{nt-sequence}
We have the exact sequence
\[
0 \to H_2(A,\mathbb{C}) \to K_1\otimes_A K_1 \stackrel{\mu}{\to} K_1 \to H_1(A,\mathbb{C}) \to 0
\]
\end{theorem}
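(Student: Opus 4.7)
The plan is to read the asserted four-term sequence as a compact summary of the normalized Hochschild chain complex in low degrees. The first step is to observe that on the tensor powers $K_1^{\otimes n}$ the two outer terms of the boundary operator $d$ vanish, since $\varepsilon|_{K_1}=0$; thus $d$ restricts to a subcomplex whose relevant piece is
\begin{equation*}
K_1\otimes K_1\otimes K_1 \xrightarrow{\overline{d}_3} K_1\otimes K_1 \xrightarrow{\overline{d}_2} K_1 \longrightarrow 0,
\end{equation*}
with $\overline{d}_2(k_1\otimes k_2)=-k_1k_2$ and $\overline{d}_3(k_1\otimes k_2\otimes k_3)=k_1\otimes k_2k_3-k_1k_2\otimes k_3$.

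I would next invoke the standard fact that, using the decomposition $A=\mathbb{C}\cdot\mathbf{1}\oplus K_1$, the inclusion of this normalized subcomplex into the full Hochschild complex is a quasi-isomorphism; the usual proof is a contracting-homotopy argument showing that the complementary chains (those containing a $\mathbf{1}$ in some tensor slot) form an acyclic subcomplex. Granted this, one reads off $H_1(A,\mathbb{C})\cong K_1/K_2$ and $H_2(A,\mathbb{C})\cong\ker(\overline{d}_2)/\operatorname{im}(\overline{d}_3)$ directly from the truncated complex.

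The key identification is then that $K_1\otimes_A K_1\cong(K_1\otimes K_1)/\operatorname{im}(\overline{d}_3)$. Indeed $K_1\otimes_A K_1$ is $K_1\otimes K_1$ modulo the span of $ka\otimes k'-k\otimes ak'$ for $a\in A$ and $k,k'\in K_1$; using $A=\mathbb{C}\cdot\mathbf{1}\oplus K_1$, the relations with $a\in\mathbb{C}\cdot\mathbf{1}$ are trivial, and those with $a\in K_1$ are, up to sign, precisely the generators of $\operatorname{im}(\overline{d}_3)$. Under this isomorphism the multiplication map $\mu$ corresponds to $-\overline{d}_2$, so $\ker(\mu)\cong H_2(A,\mathbb{C})$ and $\operatorname{im}(\mu)=K_2$. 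Splicing the short exact sequences $0\to H_2(A,\mathbb{C})\to K_1\otimes_A K_1\to K_2\to 0$ and $0\to K_2\to K_1\to H_1(A,\mathbb{C})\to 0$ produces the claimed four-term exact sequence.

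I expect the quasi-isomorphism between the normalized and unnormalized Hochschild complexes to be the only point requiring real work; it is classical and can either be cited or reproved via the usual contracting homotopy. Everything else is a direct unravelling of the definitions.
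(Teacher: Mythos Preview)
Your argument is correct and rests on the same mathematics as the paper's, though you frame it more conceptually. The paper proceeds by writing down the explicit map
\[
\hat{p}\colon Z_2(A,\mathbb{C})\to K_1\otimes_A K_1,\qquad a\otimes b\mapsto\bigl(a-\varepsilon(a)\mathbf{1}\bigr)\otimes\bigl(b-\varepsilon(b)\mathbf{1}\bigr),
\]
and then declares it ``straight-forward'' to check that $\hat{p}$ kills $B_2$, induces an injection on $H_2$, and has image $\ker(\mu)$; the identification $H_1(A,\mathbb{C})\cong K_1/K_2$ is stated separately. But $\hat{p}$ is nothing other than the retraction of $A^{\otimes 2}$ onto your subcomplex $K_1^{\otimes 2}$ coming from the splitting $A=\mathbb{C}\mathbf{1}\oplus K_1$, so the paper is in effect carrying out the degree-two piece of the quasi-isomorphism you cite. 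Your presentation has the advantage of naming the general principle (normalized versus unnormalized Hochschild chains) and isolating the only non-formal step, namely the acyclicity of the complementary chains; the paper's version is self-contained in low degrees but leaves the verifications to the reader.
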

\begin{proof}
This result is stated for group algebras in \cite[Lemma 5.6]{netzer+thom13}. The proof does not use any group properties, so it is
clear that it extends to general algebras.

It can also be verified by direct calculation. The map from
$H_2(A,\mathbb{C})$ to $K_1\otimes_A K_1$ is induced by the map from
$Z_2(A,\mathbb{C})$ to $K_1\otimes K_1$ given by the tensor product of the
projection from $A$ to $K_1$, $a\mapsto a-\varepsilon(a)\mathbf{1}$, with
itself, and the canonical projection from $K_1\otimes K_1$ to $K_1\otimes_A
K_1$. I.e., we have the map
\[
\hat{p}\colon Z_2(A,\mathbb{C}) \ni a\otimes b \mapsto
\big(a-\varepsilon(a)\mathbf{1}\big)\otimes\big(b-\varepsilon(b)\mathbf{1}\big)\in
K_1\otimes_A K_1.
\]
It is straight-forward to check that $\hat{p}$ vanishes on $B_2(A,\mathbb{C})$
and induces an injective map $p\colon H_2(A,\mathbb{C})\to K_1\otimes_A K_1$.

The map from $\mu\colon K_1\otimes_A K_1 \to K_1$ is multiplication, $\mu(a\otimes
b)=ab$, its kernel is the image $p\big(H_2(A,\mathbb{C})\big)$ and its range is the ideal
\[
K_2={\rm span}\{ab;a,b\in K_1\}.
\]
Exactness in $H_1(A,\mathbb{C})$ follows from $H_1(A,\mathbb{C})\cong K_2/K_1$.
\end{proof}

The cohomological version of this result holds as well:

\begin{theorem}\label{Theorem:exact-sequence-cohomology}
  We have the exact sequence
\[
0 \to H^1(A,\mathbb{C})\to K_1' \stackrel{\mu'}{\to}  (K_1\otimes_A K_1)'\to H^2(A,\mathbb{C})  \to 0
\]
\end{theorem}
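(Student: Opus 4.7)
The natural approach is to dualize the homological exact sequence of Theorem \ref{nt-sequence}. Since $\mathbb{C}$ is a field, the functor $\mathrm{Hom}_\mathbb{C}(-,\mathbb{C})$ is exact on vector spaces, so applying it to
\[
0 \to H_2(A,\mathbb{C}) \to K_1\otimes_A K_1 \xrightarrow{\mu} K_1 \to H_1(A,\mathbb{C}) \to 0
\]
immediately yields the exact sequence
\[
0 \to H_1(A,\mathbb{C})' \to K_1' \xrightarrow{\mu'} (K_1\otimes_A K_1)' \to H_2(A,\mathbb{C})' \to 0.
\]
The remaining work is to identify $H_n(A,\mathbb{C})'$ with $H^n(A,\mathbb{C})$ for $n=1,2$ in a way compatible with the maps in the statement.

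For this identification I would invoke the fact that, because $\partial = d^*$ and the base ring is a field, there is a natural isomorphism $H^n(A,\mathbb{C}) \cong H_n(A,\mathbb{C})'$ given on classes $[\phi] \in Z^n/B^n$ by restriction of $\phi$ to $Z_n$, followed by passage to the quotient $Z_n/B_n = H_n$ (well-defined since $\partial\phi = 0$ forces $\phi|_{B_n}=0$, and coboundaries restrict to zero on $Z_n$). Exactness of $\mathbb{C}$-duality ensures this map is bijective: any functional on $H_n$ lifts to a functional on $C_n$ whose restriction to $B_n$ vanishes, and that lift automatically satisfies $\partial(\cdot) = (\cdot)\circ d = 0$.

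Next I would verify that these identifications convert the dualized sequence into the one claimed. On the left, the proof of Theorem \ref{nt-sequence} already identified $H_1(A,\mathbb{C}) \cong K_1/K_2$, so $H_1(A,\mathbb{C})' \cong K_2^\perp \subseteq K_1'$, and this matches the standard inclusion $H^1(A,\mathbb{C})\hookrightarrow K_1'$, since derivations $A \to \mathbb{C}$ (with respect to the $\varepsilon$-$\varepsilon$ bimodule structure on $\mathbb{C}$) are exactly the functionals on $A$ that vanish on $\mathbf{1}$ and on $K_2$. On the right, given $L\in(K_1\otimes_A K_1)'$, lifting $L$ to $\tilde L\in (K_1\otimes K_1)'$ and setting
\[
\Phi(a\otimes b) := \tilde L\bigl((a-\varepsilon(a)\mathbf{1})\otimes(b-\varepsilon(b)\mathbf{1})\bigr)
\]
produces a 2-cocycle on $A$ whose cohomology class provides the map $(K_1\otimes_A K_1)'\to H^2(A,\mathbb{C})$; checking the cocycle condition is a direct expansion using that the two balanced tensors $a'b'\otimes c'$ and $a'\otimes b'c'$ coincide in $K_1\otimes_A K_1$.

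The middle map requires one compatibility check: for $\phi\in K_1'$, extended arbitrarily to $A$ with $\phi(\mathbf{1})=0$, the Hochschild coboundary restricted to $K_1\otimes K_1$ reads $\partial\phi(a\otimes b) = -\phi(ab)$, so $-\partial\phi|_{K_1\otimes K_1}$ descends to $\mu'(\phi)$ on $K_1\otimes_A K_1$. The sole obstacle is therefore pure bookkeeping: one must confirm that the two ways of producing a cohomology class (via dualization of Theorem \ref{nt-sequence} and via the Hochschild complex) commute with these identifications up to the harmless sign. Once that bookkeeping is in place, exactness is inherited from the exact dualization above.
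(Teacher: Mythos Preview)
Your argument is correct and takes a genuinely different route from the paper's. The paper proves the cohomological sequence \emph{from scratch}: it explicitly defines the map $R\colon H^1(A,\mathbb{C})\to K_1'$ by restriction and the map $L\colon(K_1\otimes_A K_1)'\to H^2(A,\mathbb{C})$ by extending a balanced functional to a $2$-cocycle via $a\otimes b\mapsto S\bigl((a-\varepsilon(a))\otimes(b-\varepsilon(b))\bigr)$, and then checks exactness directly at each node, including surjectivity of $L$ by normalizing an arbitrary $2$-cocycle so that it vanishes on degenerate tensors. Your approach instead exploits that $\mathrm{Hom}_{\mathbb{C}}(-,\mathbb{C})$ is exact, dualizes the already-established homological sequence of Theorem~\ref{nt-sequence}, and then invokes the universal-coefficient isomorphism $H^n(A,\mathbb{C})\cong H_n(A,\mathbb{C})'$ (valid because $\partial$ is the transpose of $d$ and the ground ring is a field).

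Your route is shorter and more conceptual---it makes transparent that the two theorems are literally dual---whereas the paper's hands-on verification has the advantage of being self-contained (it does not presuppose Theorem~\ref{nt-sequence}) and of furnishing the explicit description of the map $(K_1\otimes_A K_1)'\to H^2(A,\mathbb{C})$ that is used later, e.g.\ in Subsection~\ref{exa-not2=0}. Your closing remark that only ``bookkeeping'' remains is accurate: the maps you write down for $H^1\hookrightarrow K_1'$ and $(K_1\otimes_A K_1)'\to H^2$ coincide (up to the sign you note at $\mu'$) with the duals of the maps $p$ and $K_1\twoheadrightarrow K_1/K_2$ from the proof of Theorem~\ref{nt-sequence}, so the identification goes through.
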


\begin{proof}
  Since $B^1(A,\mathbb{C})=0$, $H^1(A,\mathbb{C})\cong Z^1(A,\mathbb{C})$ is the space of all derivations on $A$. We can define $R\colon H^1(A,\mathbb{C})\to K_1', R([\phi]):=\phi|_{K_1}$. For a derivation $\phi$ we have $0=\partial\phi(1\otimes1)=\phi(1)$. So $\phi|_{K_1}=0$ together with $\phi\in Z^1$ implies $\phi=0$, hence $R$ is injective. A linear functional $\varphi\colon K_1\to\mathbb{C}$ extends to a derivation on $A$ if and only if $\varphi(ab)=0$ for all $a,b\in K_1$. This shows exactness at $K'_1$. An element $S\in(K_1\otimes_A K_1)'$ lifts to a linear map $\hat S\colon K_1\otimes K_1\to \mathbb{C}$ with $\hat{S}(ab\otimes c)=\hat{S}(a\otimes bc)$ for all $a,b,c\in K_1$. Extending $\hat{S}$ to $A\otimes A$ by $\hat S(a,b):=\hat{S}((a-\varepsilon(a))\otimes (b-\varepsilon(b)))$ yields a 2-cocycle. Define the  linear map $L\colon (K_1\otimes_A K_1)'\to H^2(A,\mathbb{C})$ with $L(S)=[\hat S]$. Then $L(S)=0$ if and only if $\hat S=\partial \psi$ for some $\psi\colon A\to\mathbb{C}$. The linear functional $\psi$ can always be chosen such that $\psi(1)=0$. In that case $\hat S=\partial \psi$ is equivalent to $S=\psi|_{K_1}\circ\mu$. So we have exactness at $(K_1\otimes_A K_1)'$. Finally, every $T\in Z^2$ fulfills $T(ab\otimes c)=T(a\otimes bc)$ for all $a,b,c\in K_1$, so its restriction to $K_1\otimes K_1$ descends to a linear functional $\widetilde{T}$ on the quotient space $K_1\otimes_A K_1$. We can subtract the coboundary $T(1\otimes 1)\partial{\varepsilon}=T(1\otimes 1)\varepsilon\otimes\varepsilon$ from $T$ to get a new cocycle $T_0$ with $[T]=[T_0]$ and $T_0(1\otimes b)=T(a\otimes 1)=0$ for all $a,b\in A$. Now it is easy to check that $L(\widetilde{T_0})=[T_0]=[T]$, so $L$ is surjective.
\end{proof}

\begin{lemma}\label{nt2-lemma}
  For any $\varepsilon$-$\varepsilon$-$2$-cocycle $\phi\colon A\otimes A\to \mathbb{C}$ there exists a unique linear map $\varphi\colon K_1\otimes_A K_1\to \mathbb{C}$ such that
  \begin{align*}
    \varphi(a\otimes b)=\phi(a\otimes b)
  \end{align*}
  for all $a,b\in K_1$.
\end{lemma}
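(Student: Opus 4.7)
The plan is to first restrict $\phi$ to $K_1\otimes K_1$ and then show that the cocycle equation forces this restriction to factor through the balanced tensor product $K_1\otimes_A K_1$. Since $K_1$ is a two-sided ideal (as the kernel of a $*$-homomorphism), the balanced tensor product is well-defined as the quotient of $K_1\otimes K_1$ by the subspace spanned by all elements of the form $xa\otimes y-x\otimes ay$ with $x,y\in K_1$ and $a\in A$.

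For existence, I would denote by $\phi_0\colon K_1\otimes K_1\to\mathbb{C}$ the restriction of $\phi$. Evaluating the cocycle condition $\partial\phi=0$ (with coefficients in $\mathbb{C}$ made into a bimodule by $\varepsilon$ on both sides) on the triple $x\otimes a\otimes y$ with $x,y\in K_1$ and $a\in A$ gives
\[
0 = \varepsilon(x)\phi(a\otimes y)-\phi(xa\otimes y)+\phi(x\otimes ay)-\phi(x\otimes a)\varepsilon(y).
\]
The first and last terms vanish because $x,y\in K_1=\ker\varepsilon$, so $\phi_0(xa\otimes y)=\phi_0(x\otimes ay)$. Hence $\phi_0$ annihilates all defining relations of the balanced tensor product, and therefore descends to a well-defined linear map $\varphi\colon K_1\otimes_A K_1\to\mathbb{C}$ satisfying $\varphi(a\otimes b)=\phi(a\otimes b)$ for $a,b\in K_1$.

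For uniqueness, the canonical quotient map $K_1\otimes K_1\to K_1\otimes_A K_1$ is surjective, so the values of any linear map $\varphi$ on $K_1\otimes_A K_1$ are determined by its values on classes of pure tensors $a\otimes b$ with $a,b\in K_1$. Since two such linear maps agreeing with $\phi$ on all such pure tensors must coincide, $\varphi$ is unique.

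There is no real obstacle here; the content of the lemma is exactly the cocycle identity evaluated on triples with outer entries in $K_1$. The only minor point to be careful about is that the middle argument $a$ in the cocycle identity ranges over all of $A$, not just $K_1$, which is precisely what is needed to obtain the full set of balancing relations defining $K_1\otimes_A K_1$.
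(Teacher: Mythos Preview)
Your proof is correct and follows essentially the same approach as the paper: restrict $\phi$ to $K_1\otimes K_1$, use the cocycle identity to see that this restriction is $A$-balanced, and pass to the quotient. The paper's version is terser --- it only writes the cocycle identity for $a,b,c\in K_1$ rather than allowing the middle argument in all of $A$ as you do --- but this suffices since $A=\mathbb{C}\mathbf{1}\oplus K_1$ and the balancing relation for $a=\mathbf{1}$ is trivial; your observation about the middle argument is a nice clarification but not a genuinely different route.
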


\begin{proof}
  The restriction of $\phi$ to $K_1\otimes K_1$ passes to the quotient $K_1\otimes_A K_1$, because
  \begin{align*}
    0=\partial \phi(a\otimes b\otimes c)=-\phi(ab\otimes c)+\phi(a\otimes bc)
  \end{align*}
  for all $a,b,c\in K_1$.
\end{proof}

\begin{corollary}\label{nt-lemma}
For any cocycle $\eta\colon A\to H$ there exists a unique linear map $\mathcal{K}(\eta)\colon K_1\otimes_A K_1\to \mathbb{C}$ such that
\begin{equation}\label{eq-def-phi}
\big(\mathcal{K}(\eta)\big)(a\otimes b) = \langle\eta(a^*),\eta(b)\rangle
\end{equation}
for all $a,b\in K_1$, 
\end{corollary}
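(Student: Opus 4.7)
The plan is to deduce this directly from the two preceding results by taking $\varphi = \mathcal{K}(\eta)$ to be the linear functional produced by Lemma \ref{nt2-lemma} when applied to the 2-cocycle $\mathcal{L}(\eta)$. The combinatorial work has already been done; this corollary is just the packaging.

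Concretely, I would proceed as follows. First, invoke Proposition \ref{prop:L(eta)-cocycle} to know that $\mathcal{L}(\eta)\colon A\otimes A\to\mathbb{C}$, given by $\mathcal{L}(\eta)(a\otimes b)=\langle\eta(a^*),\eta(b)\rangle$, is an $\varepsilon$-$\varepsilon$-$2$-cocycle. Then apply Lemma \ref{nt2-lemma} to $\phi=\mathcal{L}(\eta)$: it delivers a unique linear map $\mathcal{K}(\eta)\colon K_1\otimes_A K_1\to\mathbb{C}$ whose value on the class of $a\otimes b$ (with $a,b\in K_1$) equals $\mathcal{L}(\eta)(a\otimes b)=\langle\eta(a^*),\eta(b)\rangle$, which is exactly \eqref{eq-def-phi}. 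Uniqueness of $\mathcal{K}(\eta)$ is inherited from the uniqueness clause of Lemma \ref{nt2-lemma}, since $K_1\otimes_A K_1$ is spanned by the elementary tensors $a\otimes b$ with $a,b\in K_1$.

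There is no real obstacle: the only thing one might want to double-check is that the well-definedness condition used in Lemma \ref{nt2-lemma}, namely $\phi(ab\otimes c)=\phi(a\otimes bc)$ for $a,b,c\in K_1$, comes out of the cocycle identity $\partial\phi=0$ evaluated on $a\otimes b\otimes c$ in $K_1^{\otimes 3}$ (the outer terms $\varepsilon(a)\phi(b\otimes c)$ and $\phi(a\otimes b)\varepsilon(c)$ vanish because $a,c\in K_1=\ker\varepsilon$). This is already absorbed into the statement of Lemma \ref{nt2-lemma}, so no further computation is needed.
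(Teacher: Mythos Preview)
Your proposal is correct and follows exactly the paper's own proof: invoke Proposition~\ref{prop:L(eta)-cocycle} to see that $\mathcal{L}(\eta)$ is an $\varepsilon$-$\varepsilon$-$2$-cocycle, then apply Lemma~\ref{nt2-lemma}. The paper's proof is just the one-line version of what you wrote; your additional remarks on uniqueness and the well-definedness check are sound but already implicit in Lemma~\ref{nt2-lemma}.
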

\begin{proof}
By Proposition \ref{prop:L(eta)-cocycle}, $\mathcal{L}(\eta)$ is a cocycle, hence we can apply Lemma \ref{nt2-lemma}. 
\end{proof}

\begin{remark}\label{nt-remark}
Combining Lemma \ref{nt2-lemma} and the exact sequence
\[
0 \to H_2(A,\mathbb{C})\to K_1\otimes_A K_1 \to K_1 \to H_1(A,\mathbb{C})\to 0
\]
from Theorem \ref{nt-sequence}, we see that for a given functional $\varphi\colon K_1\otimes_A K_1\to\mathbb{C}$ there exists a functional $\psi\colon A\to \mathbb{C}$ such that $\psi(\mathbf{1})=0$ and
\[
\psi(ab) = \varphi(a\otimes b) \qquad \mbox{ for } a,b\in K_1
\]
exists if and only if $\varphi$ vanishes on the range of the map from $H_2(A,\mathbb{C})$ to $K_1\otimes_A K_1$. By exactness, the range of this latter map coincides with the kernel of the map $\mu\colon K_1\otimes_A K_1\to K_1$. And $\psi$ is determined by $\varphi$ up to a linear functional on $H_1(A,\mathbb{C})$. It follows that $H_2(A,\mathbb{C})=0$ if and only if $H^2(A,\mathbb{C})=0$. For $\phi=\mathcal{L}(\eta)$, $\psi$ can alway be chosen hermitian, so it follows that a cocycle $\eta$ admits a generating functional if and only if $\mathcal{K}(\eta)$ vanishes on ${\rm ker}(\mu)\cong H_2(A,\mathbb{C})$.

We will abbreviate the condition $H^2(A,\mathbb{C})=\{0\}$ as (H$^2$Z), the discussion above shows that this condition implies the property (AC). In Subsection \ref{exa-not2=0} we shall prove that (H$^2$Z) is strictly stronger than (AC).
\end{remark}

\section{Examples}\label{sec-examples}

In this section we prove the existence of generating functionals that do not admit a L\'evy-Khinchin decomposition, see Proposition \ref{prop-not-lk}. We also study explicit examples that show that none of the other converse implications in Diagram \eqref{diagram} holds. Here is an overview of where the relevant counter-examples can be found:

\begin{center}
\begin{tabular}{rl}
(AC) $\centernot\implies$ (H$^2$Z)~: & \ref{exa-not2=0}\\
(NC) $\centernot\implies$ (AC)~: & \ref{exa-abelian} \\
(GC) $\centernot\implies$ (AC)~: & \ref{exa-p2} \\
(GC) $\centernot\implies$ (NC)~: & \ref{exa-p2} \\
(NC) $\centernot\implies$ (GC)~: & \ref{exa-abelian} \\
(LK) $\centernot\implies$ (GC)$\vee$(NC)~: & \ref{exa-free-pr}
\end{tabular}
\end{center}

If we want to define cocycles or Sch\"urmann triples on an algebra defined by
generators and relations, then it is enough to choose the values on the
generators and check that all maps vanish on the relations.

\subsection{The fundamental groups of a closed oriented surface of
  genus \texorpdfstring{$k\ge 2$}{k > 1}}\label{ex-surface-group}

The fundamental group $\Gamma_k$ of an oriented surface of
  genus $k\ge 1$ has a presentation
\[
\Gamma_k = \langle a_i,\ldots,a_k,b_1,\ldots,b_k|
a_1b_1a_1^{-1}b_1^{-1}a_2b_2a_2^{-1}b_2^{-1}\cdots
a_kb_ka_k^{-1}b_k^{-1}\rangle
\]
with $2k$ generators $a_1,\ldots, a_k,b_1,\ldots,b_k$. For $k=1$ we have $\Gamma_1\cong \mathbb{Z}^2$, this case will be treated in Subsection \ref{exa-abelian}. Let now $k\ge 2$. We will consider the group $*$-algebra $A=\mathbb{C}\Gamma_k$ with the character given by the trivial representation, i.e., $\varepsilon(a_\ell)=\varepsilon(b_\ell)=1$ for $\ell=1,\ldots, k$. Then we have $H_1(\mathbb{C}\Gamma_k,\mathbb{C})=\mathbb{C}^{2k}$ and $H_2(\mathbb{C}\Gamma_k,\mathbb{C})=\mathbb{C}$, cf.\ \cite[II.4 Example 2]{brown82}.

\begin{proposition}
Let $D$ be a pre-Hilbert space.
\begin{enumerate}[label=\textnormal{(\alph*)},leftmargin=*]
\item For any $x_1,\ldots, x_k,y_1,\ldots,y_k\in D$ there
  exists a unique Gaussian cocycle on
  $(\mathbb{C}\Gamma_k,\varepsilon)$ with
  \begin{align*}
    \eta(a_j) = x_j,\quad \eta(b_j) = y_j,
  \end{align*}
  for $j=1,\ldots,k$.
\item A Gaussian cocycle on $(\mathbb{C}\Gamma_k,\varepsilon)$
  admits a generating functional if and only if
  \[
  \sum_{j=1}^k \langle \eta(a_j),\eta(b_j)\rangle \in\mathbb{R}.
  \]
  In particular, $(\mathbb{C}\Gamma_k,\varepsilon)$ does not have
  \textnormal{(GC)}.
\end{enumerate}

\end{proposition}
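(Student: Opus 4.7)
The plan is to prove (a) by identifying a Gaussian cocycle with a group homomorphism, and (b) by applying Remark \ref{nt-remark} together with an explicit generator of the one-dimensional space $\ker\mu\cong H_2(\mathbb{C}\Gamma_k,\mathbb{C})$ built from Fox calculus.

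For (a), since $\varepsilon|_{\Gamma_k}\equiv 1$, the cocycle identity of a Gaussian $\eta$ reads $\eta(gh)=\eta(g)+\eta(h)$ on $\Gamma_k$, so $\eta|_{\Gamma_k}$ is a group homomorphism into the abelian group $(D,+)$. Any such homomorphism factors through the abelianization $\Gamma_k^{\mathrm{ab}}\cong\mathbb{Z}^{2k}$, because the relator $r=\prod_{j=1}^k a_j b_j a_j^{-1}b_j^{-1}$ lies in $[\Gamma_k,\Gamma_k]$; conversely, any assignment $a_j\mapsto x_j$, $b_j\mapsto y_j$ extends uniquely and linearly to a derivation on $\mathbb{C}\Gamma_k$.

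For (b), Remark \ref{nt-remark} reduces the problem to whether the functional $\mathcal{K}(\eta)$ vanishes on the image of $H_2(\mathbb{C}\Gamma_k,\mathbb{C})\cong\ker\mu$ in $K_1\otimes_A K_1$, a one-dimensional testing space. A candidate generator comes from Fox calculus: the identity $w-1=\sum_i\tfrac{\partial w}{\partial x_i}(x_i-1)$ in $\mathbb{C}F$ applied to $w=r$ and projected to $\mathbb{C}\Gamma_k$ (where $r=1$) gives $\sum_i\tfrac{\partial r}{\partial x_i}(x_i-1)=0$, hence
\[
\omega:=\sum_{i=1}^{2k}\tfrac{\partial r}{\partial x_i}\otimes(x_i-1)\in\ker\mu\subseteq K_1\otimes_A K_1,
\]
where each Fox derivative lies in $K_1$ by a direct augmentation check. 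The core of the proof is then to evaluate $\mathcal{K}(\eta)(\omega)$. Writing $r_j:=a_j b_j a_j^{-1}b_j^{-1}$, the Fox derivatives are
\[
\tfrac{\partial r}{\partial a_j}=r_1\cdots r_{j-1}(1-a_j b_j a_j^{-1}),\qquad \tfrac{\partial r}{\partial b_j}=r_1\cdots r_{j-1}(a_j-r_j).
\]
Using $\eta(r_\ell)=0$, $\eta(a_j b_j a_j^{-1})=y_j$, $\eta(g^{-1})=-\eta(g)$ and additivity of $\eta$ on group elements, a short calculation gives $\eta\bigl((\tfrac{\partial r}{\partial a_j})^*\bigr)=y_j$ and $\eta\bigl((\tfrac{\partial r}{\partial b_j})^*\bigr)=-x_j$, whence
\[
\mathcal{K}(\eta)(\omega)=\sum_{j=1}^k\bigl(\langle y_j,x_j\rangle-\langle x_j,y_j\rangle\bigr)=-2i\sum_{j=1}^k\operatorname{Im}\langle x_j,y_j\rangle.
\]
This vanishes exactly when $\sum_j\langle x_j,y_j\rangle\in\mathbb{R}$, proving (b). The choice $x_1=v$, $y_1=iv$ for any $v\in D\setminus\{0\}$, with all remaining values zero, shows both that $\omega\ne 0$ (so $\omega$ genuinely generates the one-dimensional $\ker\mu$) and that $(\mathbb{C}\Gamma_k,\varepsilon)$ fails \textnormal{(GC)}.

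The hard part will be the Fox derivative bookkeeping -- computing $\eta$ on the $*$-adjoints $(\tfrac{\partial r}{\partial x_i})^*$ with correct signs, and verifying that the Gaussian cancellations $\eta(r_\ell)=0$ propagate through the partial products $r_1\cdots r_{j-1}$ via additivity of $\eta$ on $\Gamma_k$.
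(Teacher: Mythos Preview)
Your proof is correct, but it follows a genuinely different route from the paper's for part (b). The paper lifts $\eta$ to a Gaussian cocycle $\tilde\eta$ on the free group $\mathbb{F}_{2k}$, where (AC) holds since $H^2(\mathbb{C}\mathbb{F}_{2k},\mathbb{C})=0$, and then asks whether a generating functional $\tilde\psi$ for $\tilde\eta$ can descend to $\mathbb{C}\Gamma_k$; this amounts to checking $\tilde\psi(r)=0$, which is computed by repeatedly applying the coboundary identity $\tilde\psi(g_1g_2)=\tilde\psi(g_1)+\langle\tilde\eta(g_1^{-1}),\tilde\eta(g_2)\rangle+\tilde\psi(g_2)$ along the word $r=a_1b_1a_1^{-1}b_1^{-1}\cdots a_kb_ka_k^{-1}b_k^{-1}$. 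The telescoping leaves exactly $\sum_\ell\bigl(\langle\tilde\eta(b_\ell),\tilde\eta(a_\ell)\rangle-\langle\tilde\eta(a_\ell),\tilde\eta(b_\ell)\rangle\bigr)$.

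You instead invoke the paper's own homological criterion (Remark \ref{nt-remark}) directly: a generating functional exists iff $\mathcal{K}(\eta)$ vanishes on $\ker\mu\cong H_2(\mathbb{C}\Gamma_k,\mathbb{C})$, and you produce an explicit spanning element $\omega$ of this one-dimensional space via the Fox identity $r-1=\sum_i\frac{\partial r}{\partial x_i}(x_i-1)$. Your evaluation of $\mathcal{K}(\eta)(\omega)$ is correct, as is the slick argument that $\omega\neq 0$ because some Gaussian $\eta$ detects it. Your approach is arguably more in the spirit of the paper's general framework and scales cleanly (the Fox calculus packaging avoids the iterative bookkeeping of the paper's direct expansion), whereas the paper's computation is more elementary in that it does not invoke Fox derivatives or the explicit identification of $\ker\mu$. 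Both arrive at the same imaginary-part condition.
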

\begin{proof}\ 
\begin{enumerate}[label=\textnormal{(\alph*)},leftmargin=*]
\item
For any $x_1,\ldots, x_k,y_1,\ldots,y_k\in D$ we can define a Gaussian cocycle $\tilde{\eta}$ on the free group $\mathbb{F}_{2k}$ (or its group $*$-algebra $\mathbb{C}\mathbb{F}_{2k}$). Because of the universality of the free group, we can do this simply by defining $\tilde{\eta}(a_\ell)=x_\ell$, $\eta(b_\ell)=y_\ell$ for $\ell=1,\ldots,k$ --- we denote the $2k$ generators of $\mathbb{F}_{2k}$ also by $a_1,\ldots,a_k,b_1,\ldots,b_k$ --- and extending $\tilde{\eta}$ as a derivation.  To get a cocycle on $\mathbb{C}\Gamma_k$ we have to check that this cocycle respects the defining relation of $\Gamma_k$, i.e., that $\tilde{\eta}(a_1b_1a_1^{-1}b_1^{-1}\cdots a_kb_ka_k^{-1}b_k^{-1})=\tilde{\eta}(\mathbf{1})=0$. Since Gaussian cocycles are derivations, we have $\tilde{\eta}(g^{-1})=-\tilde{\eta}(g)$ for all $g\in\mathbb{F}_{2k}$ and
\begin{align*}
\tilde{\eta}(a_1b_1a_1^{-1}&b_1^{-1}\cdots a_kb_ka_k^{-1}b_k^{-1}) \\&= \tilde{\eta}(a_1) + \tilde{\eta}(b_1)-\tilde{\eta}(a_1) - \tilde{\eta}(b_1) + \cdots - \tilde{\eta}(b_k) = 0.
\end{align*}
\item
The free group has $H^1(\mathbb{C}\mathbb{F}_{2k},\mathbb{C})=\mathbb{C}^{2k}$and $H^2(\mathbb{C}\mathbb{F}_{2k},\mathbb{C})=\{0\}$, cf.\ \cite[II.4 Example 1]{brown82} or \cite[Corollaire I.6.1]{guichardet80}, therefore $(\mathbb{C}\mathbb{F}_{2k},\mathbb{C})$ has the (AC)-property. Let $\eta$ be a Gaussian cocycle on $(\mathbb{C}\Gamma_k,\varepsilon)$. Denote by $\tilde{\eta}$ the cocycle on $(\mathbb{C}\mathbb{F}_{2k},\mathbb{C})$ obtained by composing $\eta$ with the canonical projection $\mathbb{C}\mathbb{F}_{2k}\to\mathbb{C}\Gamma_k$ and let $\tilde{\psi}$ be a generating functional for $\tilde{\eta}$. Then $\eta$ admits a generating functional iff we can choose $\tilde{\psi}$ such that it vanishes on the ideal generated by $a_1b_1a_1^{-1}b_1^{-1}\cdots a_kb_ka_k^{-1}b_k^{-1}-\mathbf{1}$. This is the case iff
\begin{align}
0 &= \tilde{\psi}(\mathbf{1}) = \tilde{\psi}(a_1b_1a_1^{-1}b_1^{-1}\cdots a_kb_ka_k^{-1}b_k^{-1}) \nonumber \\
&=
\tilde{\psi}(a_1) + \langle\underbrace{\tilde{\eta}(a_1^{-1})}_{ -\tilde{\eta}(a_1)},\underbrace{\tilde{\eta}(b_1a_1^{-1}b_1^{-1}\cdots a_kb_ka_k^{-1}b_k^{-1})}_{-\tilde{\eta}(a_1)}\rangle\nonumber \\ &\quad+  \tilde{\psi}(b_1a_1^{-1}b_1^{-1}\cdots a_kb_ka_k^{-1}b_k^{-1}) \nonumber \\
&= \tilde{\psi}(a_1) + \langle\tilde{\eta}(a_1),\tilde{\eta}(a_1)\rangle \nonumber \\
&\quad + \tilde{\psi}(b_1) + \langle\underbrace{\tilde{\eta}(b_1^{-1})}_{-\tilde{\eta}(b_1)},\underbrace{\tilde{\eta}(a_1^{-1}b_1^{-1}\cdots a_kb_ka_k^{-1}b_k^{-1})}_{-\tilde{\eta}(a_1)-\tilde{\eta}(b_1)}\rangle  \nonumber \\ &\quad+  \tilde{\psi}(a_1^{-1}b_1^{-1}\cdots a_kb_ka_k^{-1}b_k^{-1}) \nonumber \\
&=  \tilde{\psi}(a_1) + \langle\tilde{\eta}(a_1),\tilde{\eta}(a_1)\rangle + \tilde{\psi}(b_1) + \langle\tilde{\eta}(b_1),\tilde{\eta}(b_1)\rangle + \langle\tilde{\eta}(b_1),\tilde{\eta}(a_1)\rangle \nonumber \\
& \quad+ \tilde{\psi}(a_1^{-1}) +  \langle\tilde{\eta}(a_1),\underbrace{\tilde{\eta}(b_1^{-1}\cdots a_kb_ka_k^{-1}b_k^{-1})}_{-\tilde{\eta}(b_1)}\rangle  +  \langle\tilde{\psi}(b_1^{-1}\cdots a_kb_ka_k^{-1}b_k^{-1}) \nonumber \\
&= \cdots \nonumber \displaybreak[0]\\
&= 
\sum_{\ell=1}^k \left(\tilde{\psi}(a_\ell) + \tilde{\psi}(a^{-1}_\ell) + \langle\tilde{\eta}(a_\ell),\tilde{\eta}(a_\ell)\rangle\right) \nonumber \\ &\quad+ \sum_{\ell=1}^k \left(\tilde{\psi}(b_\ell) + \tilde{\psi}(b^{-1}_\ell) + \langle\tilde{\eta}(b_\ell),\tilde{\eta}(b_\ell)\rangle\right) \nonumber \\
& \quad+ \sum_{\ell=1}^k \left(\langle\tilde{\eta}(b_\ell),\tilde{\eta}(a_\ell)\rangle - \langle\tilde{\eta}(a_\ell),\tilde{\eta}(b_\ell)\rangle\right),
\label{eq-cond}
\end{align}
where we used repeatedly the fact that $-\partial \tilde{\psi}$ is equal to $\mathcal{L}(\tilde{\eta})$, i.e., $\psi(g_1g_2)=\psi(g_1) + \langle\tilde{\eta}(g_1^{-1}),\tilde{\eta}(g_2)\rangle+\psi(g_2)$ for $g_1,g_2\in\mathbb{F}_{2k}$. But $-\partial \tilde{\psi}=\mathcal{L}(\tilde{\eta})$ implies also
\begin{eqnarray*}
0&=&\tilde{\psi}(\mathbf{1}) = \tilde{\psi}(g^{-1}g) \\
&=& \tilde{\psi}(g) + \tilde{\psi}(g^{-1}) + \langle\tilde{\eta}(g),\tilde{\eta}(g)\rangle
\end{eqnarray*}
for any $g\in\mathbb{F}_{2k}$,
and therefore the first two sums in the final expression in Equation \eqref{eq-cond} vanish. The remaining third sum is equal to
\[
\sum_{\ell=1}^k {\rm Im}\left(\langle\tilde{\eta}(b_\ell),\tilde{\eta}(a_\ell)\rangle\right).
\]
which leads to the desired condition for the existence of $\psi$.\qedhere
\end{enumerate}
\end{proof}

\begin{proposition}
$(\mathbb{C}\Gamma_k,\varepsilon)$ does not have \textnormal{(NC)} for $k\ge 2$.
\end{proposition}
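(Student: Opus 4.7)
The plan is to exhibit a pair $(\pi,\eta)$ on $(\mathbb{C}\Gamma_k,\varepsilon)$ with $D_G=\{0\}$ which cannot be completed to a Sch\"urmann triple. I will take $\pi$ to be a non-trivial one-dimensional unitary character $\chi\colon\Gamma_k\to U(1)$ acting on $D=\mathbb{C}$. Any such $\chi$ factors through the abelianization $\mathbb{Z}^{2k}$ of $\Gamma_k$, so it is freely prescribed on generators. Because $\chi\ne\varepsilon$ already forces the span of $\{(\chi(a)-\varepsilon(a))v:a\in A,v\in\mathbb{C}\}$ to be all of $\mathbb{C}$, we get $D_G=\{0\}$ automatically, and hence any $\chi$-cocycle for this $\pi$ is purely non-Gaussian. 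Concretely I will set $\chi(a_1)=\chi(a_2)=1$, $\chi(b_1)=\chi(b_2)=-1$, and $\chi=1$ on the remaining generators.

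For the cocycle, I will define $\eta(a_1)=1$, $\eta(b_1)=i$, $\eta(a_2)=-1$, $\eta(b_2)=0$, and $\eta=0$ on the remaining generators, extending by the cocycle relation $\eta(gh)=\chi(g)\eta(h)+\eta(g)$ to a $\chi$-cocycle on $\mathbb{F}_{2k}$. Descent to $\mathbb{C}\Gamma_k$ requires $\eta(w)=0$, which follows from the elementary identity
\[
\eta([a_\ell,b_\ell])=(1-\chi(b_\ell))\eta(a_\ell)+(\chi(a_\ell)-1)\eta(b_\ell),
\]
obtained by iterating the cocycle rule and using $\eta(g^{-1})=-\chi(g)^{-1}\eta(g)$. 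With the above choices this gives $\eta(c_1)=2$, $\eta(c_2)=-2$, and $\eta(c_\ell)=0$ for $\ell\ge 3$, and since each $\chi(c_\ell)=1$, one more iteration of the cocycle rule yields $\eta(w)=\sum_{\ell=1}^{k}\eta(c_\ell)=0$.

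For the obstruction I will follow the same strategy as in the previous proposition. Since $H^2(\mathbb{C}\mathbb{F}_{2k},\mathbb{C})=0$ the algebra $\mathbb{C}\mathbb{F}_{2k}$ has (AC), so $(\tilde\pi,\tilde\eta)$ is completed by some $\tilde\psi$ on $\mathbb{C}\mathbb{F}_{2k}$, and a completion on $\mathbb{C}\Gamma_k$ exists iff $\tilde\psi$ can be chosen with $\tilde\psi(w)=0$. Because $w$ lies in the commutator subgroup and hermitian derivations of $\mathbb{C}\mathbb{F}_{2k}$ are homomorphisms into $i\mathbb{R}$, they all vanish on $w$; hence $\tilde\psi(w)$ is independent of the choice of $\tilde\psi$. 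Iterating the Sch\"urmann relations
\[
\tilde\psi(gh)=\tilde\psi(g)+\tilde\psi(h)-\chi(g)\overline{\tilde\eta(g)}\tilde\eta(h),\qquad \tilde\psi(g)+\tilde\psi(g^{-1})=-|\tilde\eta(g)|^2,
\]
one computes $\tilde\psi(c_\ell)=-2|\eta(a_\ell)|^2+2i\,\mathrm{Im}\bigl(\overline{\eta(a_\ell)}\eta(b_\ell)\bigr)$ for $\ell=1,2$ and $\tilde\psi(c_\ell)=0$ for $\ell\ge 3$, and therefore
\[
\tilde\psi(w)=\tilde\psi(c_1)+\tilde\psi(c_2)-\overline{\eta(c_1)}\eta(c_2)=(-2+2i)+(-2)-(2)(-2)=2i\ne 0.
\]

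The main technical obstacle will be the iterative computation of $\tilde\psi(c_\ell)$, which carries the extra $\chi$-factors absent in the Gaussian case of the previous proposition; the crucial cancellations are supplied by the identity $\tilde\psi(g)+\tilde\psi(g^{-1})=-|\tilde\eta(g)|^2$, and the remaining manipulations are routine bookkeeping analogous to that proof.
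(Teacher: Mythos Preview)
Your argument is correct. The computations of $\eta(c_\ell)$, $\tilde\psi(c_\ell)$, and $\tilde\psi(w)=2i$ all check out, and the observation that hermitian derivations on $\mathbb{C}\mathbb{F}_{2k}$ vanish on the commutator $w$ indeed forces $\tilde\psi(w)$ to be independent of the chosen completion, so the obstruction is genuine. (For the ``iff'' you state, the direction you actually use is the easy one; but the converse also holds, since $\tilde\psi(gwh)=\tilde\psi(gh)$ follows from $\chi(w)=1$, $\eta(w)=0$, $\tilde\psi(w)=0$ together with the cocycle relation, so $\tilde\psi$ really descends.)

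Your route differs from the paper's in two ways. First, the concrete example is different: the paper takes $\pi(b_2)=-\mathrm{id}$ and the other generators to $\mathrm{id}$, with cocycle values $\eta(a_1)=\eta(a_2)=1$, $\eta(b_1)=\eta(b_2)=0$; you instead take $\chi(b_1)=\chi(b_2)=-1$ and use $\eta(b_1)=i$. Second, and more substantively, the paper's obstruction argument is homological: it invokes Remark~\ref{nt-remark} and evaluates $\mathcal{L}(\eta)$ on an explicit generator of $\ker(\mu)\cong H_2(\mathbb{C}\Gamma_2,\mathbb{C})$ inside $K_1\otimes_A K_1$. You avoid naming that generator entirely and instead mimic the lifting-to-$\mathbb{F}_{2k}$ computation from the preceding (GC) proposition, carrying the extra $\chi$-twists through by hand. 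The paper's approach is shorter once the exact sequence and the explicit $H_2$-generator are in place; yours is more self-contained and shows that the same elementary technique used for the Gaussian case adapts to the non-Gaussian one.
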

\begin{proof}
It is sufficient to prove this for $k=2$. We consider the $*$-represention given by
\begin{gather*}
\pi(a_1)=\pi(b_1)=\pi(a_2) = {\rm id}_D, \\
\pi(b_2) = -{\rm id}_D
\end{gather*}
on some pre-Hilbert space $D$.
There exists a cocycle $\eta\colon\mathbb{C}\Gamma_2\to D$ with
\begin{align*}
\eta(a_j) &= x_j, \\
\eta(b_j) &= y_j, 
\end{align*}
$j=1,2$, if and only if
\begin{align*}
\eta(a_1b_1a_1^{-1}&b_1^{-1}a_2b_2a_2^{-1}b_2^{-1}) \\
&= -\eta(b_2^{-1}) -
\eta(a_2^{-1})+\eta(b_2)+\eta(a_2) + \eta(b_1^{-1}) +\eta(a_1^{-1})
+\eta(b_1)+\eta(a_1) \\
&= y_2-x_2+y_2+x_2-y_1-x_1+y_1+x_1 \\
&= 2y_2 \stackrel{\mbox{!}}{=} 0.
\end{align*}
By Remark \ref{nt-remark}, there exists a generating functional for such a cocycle if and only if $\mathcal{L}(\eta)$ vanishes on
\begin{align*}
c_2 &= (a_1^{-1}-1)\otimes(b_1^{-1}-1)b_2a_2 - (b_1^{-1}-1)\otimes
(a_1^{-1}-1)b_2a_2 \\
&\quad + a_1^{-1}b_1^{-1}(a_2-1)\otimes(b_2-1)-a_1^{-1}b_1^{-1}(b_2-1)\otimes(a_2-1),
\end{align*}
since $c_2$ spans ${\rm ker}(\mu)\cong H_2(\mathbb{C}\Gamma_2,\mathbb{C})$.
After some calculation one finds that this is equivalent to the condition
\[
\langle x_1,y_1\rangle - \langle y_1,x_1\rangle \stackrel{\mbox{!}}{=} 
\langle x_2,y_2\rangle - \langle y_2-2y_1-2x_1,x_2\rangle.
\]
Take, e.g., $D=\mathbb{C}$, $x_1=x_2=1$, $y_1=y_2=0$, then there exists a
cocycle with $\eta(a_i)=x_i$, $\eta(b_i)=y_i$ for $i=1,2$. But there exists no generating functional for this cocycle.
\end{proof}

\begin{proposition}\label{prop-not-lk}
$(\mathbb{C}\Gamma_k,\varepsilon)$ does not have ${\rm (LK)}$ for $k\ge 2$.
\end{proposition}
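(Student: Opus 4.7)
The strategy is to construct a generating functional $\psi$ on $(\mathbb{C}\Gamma_k,\varepsilon)$ whose associated cocycle $\eta$ is built \emph{a priori} as $\eta=\eta_G+\eta_R$ taking values in an orthogonal direct sum $D=D_G\oplus D_R$, with representation $\pi=\varepsilon\operatorname{id}_{D_G}\oplus\pi_R$, where $\eta_G$ is Gaussian, $\eta_R$ is purely non-Gaussian, and $\eta_G$ admits no generating functional on its own. Because $(\pi(a)-\varepsilon(a))v$ lives in $\{0\}\oplus D_R$ and spans it (by pure non-Gaussianity of $\eta_R$), the Sch\"urmann projections applied to $\eta$ return precisely $\eta_G$ and $\eta_R$; hence any L\'evy-Khinchin decomposition of $\psi$ would yield a generating functional for $\eta_G$, a contradiction.

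It suffices to treat $k=2$, since for $k\ge 3$ we simply set $\eta(a_j)=\eta(b_j)=0$ and $\pi(a_j)=\pi(b_j)=\operatorname{id}$ for $j\ge 3$ and the obstruction read off the single generator of $H_2(\mathbb{C}\Gamma_k,\mathbb{C})\cong\mathbb{C}$ is unaffected. For the Gaussian summand take $D_G=\mathbb{C}$ and the unique derivation with $\eta_G(a_1)=1$, $\eta_G(b_1)=i$, $\eta_G(a_2)=\eta_G(b_2)=0$; since $\sum_j\langle\eta_G(a_j),\eta_G(b_j)\rangle=\langle 1,i\rangle\notin\mathbb{R}$, the criterion in part (b) of the Gaussian proposition above rules out any generating functional for $\eta_G$. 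For the non-Gaussian summand, take $D_R=\mathbb{C}$ and re-use the representation from the refutation of (NC): $\pi_R(b_2)=-\operatorname{id}_{D_R}$ and all other generators acting as $\operatorname{id}_{D_R}$, with $\eta_R(a_j)=x_j$, $\eta_R(b_j)=y_j$ subject to the single linear constraint imposed by the surface-group relation.

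Because $D_G\perp D_R$, one has $\mathcal{L}(\eta)=\mathcal{L}(\eta_G)+\mathcal{L}(\eta_R)$, and by Remark \ref{nt-remark} the pair $(\pi,\eta)$ completes to a Sch\"urmann triple on $\mathbb{C}\Gamma_2$ iff $\mathcal{L}(\eta_G)(c_2)+\mathcal{L}(\eta_R)(c_2)=0$. The explicit formula for $\mathcal{L}(\eta_R)(c_2)$ worked out in the refutation of (NC) is an affine function of the free parameters among $(x_1,y_1,x_2,y_2)$; for example, fixing $x_1=0$ and letting $y_1,x_2\in\mathbb{C}$ range freely already realizes every complex value. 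Choosing these parameters so that the total obstruction vanishes yields a Sch\"urmann triple $(\pi,\eta,\psi)$ which, by the first paragraph, admits no L\'evy-Khinchin decomposition.

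The one step that genuinely needs a computation is the surjectivity of the map sending admissible non-Gaussian parameters to $\mathcal{L}(\eta_R)(c_2)\in\mathbb{C}$, since one must cancel an arbitrary Gaussian obstruction; everything else follows directly from the formalism of Section \ref{sec-hoch} and the two preceding propositions.
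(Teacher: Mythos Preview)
Your proof is correct and follows the same strategy as the paper: form a direct sum $\eta=\eta_G\oplus\eta_R$ of a Gaussian and a purely non-Gaussian cocycle, neither of which admits a generating functional, and then tune parameters so that the two obstructions $\mathcal{L}(\eta_G)(c_2)$ and $\mathcal{L}(\eta_R)(c_2)$ cancel. The only cosmetic difference is that the paper fixes a non-Gaussian $\eta_R$ with nonzero obstruction and varies the Gaussian side (noting that $\mathcal{L}(\eta_G)(c_2)$ can be made to take any prescribed value), whereas you fix $\eta_G$ and vary $\eta_R$; your first paragraph also spells out explicitly why the Sch\"urmann projections return the given summands, which the paper leaves implicit.
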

\begin{proof}
Take a direct sum $\eta=\eta_1\oplus \eta_2$ of a Gaussian cocycle $\eta_1$ and a non-Gaussian cocycle $\eta_2$, which admit no
generating functionals, in such a way that $\mathcal{L}(\eta)=\mathcal{L}(\eta_1)+\mathcal{L}(\eta_2)$ vanishes on $c_2$. Then this direct sum does admit a generating functional, but the resulting generating functional does not admit a L\'evy-Khinchin decomposition. This is possible, because we can choose the values of the Gaussian cocycle $\eta_1$ on the generators $a_1,a_2,b_1,b_2$ such that
\[
\mathcal{L}(\eta_1)(c_2) = {\rm Im}\left(\langle \eta(b_1),\eta(a_1)\rangle + \langle \eta(b_2),\eta(a_2)\rangle\right)
\]
takes any real number we want as value.
\end{proof}

\subsection{Free abelian groups}\label{exa-abelian}

For the free abelian groups $\mathbb{Z}^k$, $k\ge 1$, and the character $\varepsilon\colon\mathbb{C}\mathbb{Z}^k\to\mathbb{C}$ coming from the trivial representation, we have $H_1(\mathbb{C}\mathbb{Z}^k,\mathbb{C})=\mathbb{C}^k$ and $H_2(\mathbb{C}\mathbb{Z}^k,\mathbb{C})=\mathbb{C}^{\frac{k(k-1)}{2}}$, cf.\ \cite[II.4 Example 4]{brown82} or \cite[Proposition I.6.2]{guichardet80}. \cite[Theorem 3.12]{schuermann90} by Sch\"urmann implies that $(\mathbb{C}\mathbb{Z}^k,\varepsilon)$ has property (LK). Actually it also has property (NC).

\begin{proposition}
Any purely non-Gaussian cocycle on $\mathbb{C}\mathbb{Z}^k$ admits a generating functional.
\end{proposition}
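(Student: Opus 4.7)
The plan is to apply the criterion of Remark \ref{nt-remark}: the cocycle $\eta$ admits a generating functional if and only if $\mathcal{K}(\eta)\colon K_1\otimes_A K_1\to\mathbb{C}$ vanishes on $\ker(\mu)\cong H_2(\mathbb{C}\mathbb{Z}^k,\mathbb{C})$. Since $H_2(\mathbb{C}\mathbb{Z}^k,\mathbb{C})\cong\mathbb{C}^{\binom{k}{2}}$ is spanned by the classes of the commutator 2-cycles $c_{ij}=g_i\otimes g_j-g_j\otimes g_i$ for $1\le i<j\le k$, whose images in $K_1\otimes_A K_1$ read $\hat p(c_{ij})=(g_i-1)\otimes(g_j-1)-(g_j-1)\otimes(g_i-1)$, and since a short computation (using $\eta(g^{-1})=-\pi(g)^{-1}\eta(g)$ together with $\pi(g^{-1})^*=\pi(g)$) yields
\[
\mathcal{K}(\eta)\bigl(\hat p(c_{ij})\bigr)=\langle\eta(g_j),\pi(g_j)\eta(g_i)\rangle-\langle\eta(g_i),\pi(g_i)\eta(g_j)\rangle=:\omega_{ij},
\]
the task reduces to proving $\omega_{ij}=0$ for every pair $i<j$.

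Replacing $D$ by $\eta(A)$, let $H=\overline{D}$ denote its Hilbert completion; the unitaries $\pi(g_\ell)$ extend uniquely to a unitary representation of $\mathbb{Z}^k$ on $H$, and the purely non-Gaussian hypothesis $D_G=\{0\}$ is equivalent to the absence of a non-zero joint fixed vector in $H$. The spectral theorem applied to the commuting family $\pi(g_1),\ldots,\pi(g_k)$ furnishes a direct-integral decomposition
\[
H\cong\int^{\oplus}_{\mathbb{T}^k}H_\chi\,d\mu(\chi),
\]
in which $\pi(g_\ell)$ acts on the fiber $H_\chi$ as multiplication by $\chi_\ell:=\chi(g_\ell)\in\mathbb{T}$; the no-fixed-vector condition is precisely $\mu(\{\mathbf{1}\})=0$. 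Writing $y_\ell(\chi):=\eta(g_\ell)(\chi)\in H_\chi$, the cocycle compatibilities $(\pi(g_\ell)-1)\eta(g_m)=(\pi(g_m)-1)\eta(g_\ell)$ become the fiberwise scalar relations $(\chi_\ell-1)y_m(\chi)=(\chi_m-1)y_\ell(\chi)$.

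For $\mu$-almost every $\chi$ one can pick an index $\ell_0$ with $\chi_{\ell_0}\ne 1$, which forces the proportionality $y_m(\chi)=\tfrac{\chi_m-1}{\chi_{\ell_0}-1}\,y_{\ell_0}(\chi)$ for every $m$. Substituting this into the fiberwise expression $\omega_{ij}(\chi)=\chi_j\langle y_j(\chi),y_i(\chi)\rangle-\chi_i\langle y_i(\chi),y_j(\chi)\rangle$ and using $|\chi_\ell|=1$ (so that $\chi_\ell(\overline{\chi_\ell}-1)=1-\chi_\ell$), both terms collapse to the same quantity
\[
\chi_j\langle y_j(\chi),y_i(\chi)\rangle=\chi_i\langle y_i(\chi),y_j(\chi)\rangle=\frac{-(\chi_i-1)(\chi_j-1)}{|\chi_{\ell_0}-1|^2}\,\|y_{\ell_0}(\chi)\|^2,
\]
whence $\omega_{ij}(\chi)=0$ pointwise and integration gives $\omega_{ij}=0$. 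Remark \ref{nt-remark} then yields the desired generating functional. The one delicate step is the spectral-theoretic passage from the pre-Hilbert $D$ to the Hilbert completion $H$ and the invocation of direct-integral theory for the commuting unitaries; once this framework is in place, the identity $\omega_{ij}(\chi)=0$ is the elementary cancellation forced by the unitarity $|\chi_\ell|=1$.
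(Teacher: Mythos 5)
Your argument is correct, but it proves the statement by a genuinely different route than the paper. The paper's proof is essentially a citation: it identifies $\mathbb{C}\Gamma$ for a discrete abelian group $\Gamma$ with the $*$-algebra $\mathcal{R}(\hat\Gamma)$ of representative functions on the compact dual group and invokes Skeide's result that purely non-Gaussian cocycles on such algebras always admit generating functionals; this yields the claim for all discrete abelian groups in two lines. You instead run the paper's own homological machinery to the end: via Remark \ref{nt-remark} you reduce to showing that $\mathcal{K}(\eta)$ kills $\ker(\mu)\cong H_2(\mathbb{C}\mathbb{Z}^k,\mathbb{C})\cong\Lambda^2\mathbb{C}^k$, you correctly identify the spanning elements $\hat p(g_i\otimes g_j-g_j\otimes g_i)$ and the resulting obstruction $\omega_{ij}=\langle\eta(g_j),\pi(g_j)\eta(g_i)\rangle-\langle\eta(g_i),\pi(g_i)\eta(g_j)\rangle$, and you show $\omega_{ij}=0$ by diagonalizing the commuting unitaries on the completion and using that $D_G=\{0\}$ means no joint fixed vector, so the spectral mass at the trivial character is null (more precisely the joint spectral projection of $\{\mathbf 1\}$ vanishes, which is what you actually use; if a fiber over $\mathbf 1$ survives with positive scalar measure it is zero, so the integrand still vanishes there). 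The pointwise cancellation via $(\chi_\ell-1)y_m(\chi)=(\chi_m-1)y_\ell(\chi)$ and $|\chi_\ell|=1$ is correct, and it also explains, as a degenerate case, why the Gaussian obstruction $\operatorname{Im}\langle\eta(a),\eta(b)\rangle$ in Subsection \ref{exa-abelian} does not vanish: there all spectral mass sits at $\mathbf 1$. What each approach buys: the paper's is shorter and covers arbitrary discrete abelian groups, but outsources the work to Skeide's Hunt-formula construction; yours is self-contained, stays entirely within the exact-sequence framework of Section \ref{sec-hoch} (parallel to the surface-group computations), at the cost of the spectral-theorem/direct-integral passage (harmless here since $\eta(A)$ is countable-dimensional, hence $H$ separable) and of quoting the standard fact that the commutator cycles span $H_2(\mathbb{Z}^k,\mathbb{C})$, a fact the paper itself uses implicitly for its generator $c_1$.
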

\begin{proof}
This results holds actually for all discrete abelian groups, it can be deduced from a result by Skeide \cite{skeide99}. The $*$-algebra $\mathbb{C}\Gamma$ of a discrete abelian group $\Gamma$ is isomorphic to the $*$-algebra $\mathcal{R}(\hat{\Gamma})$ generated by the coefficients of a faithful finite-dimensional representation of its dual group $\hat{\Gamma}$. In \cite[Section 3.2]{skeide99} it is shown that any purely non-Gaussian cocycle on the $*$-Hopf algebra $\mathcal{R}(\hat{\Gamma})$ of representative functions on a compact group $\hat{\Gamma}$ admits a generating functional, see in particular \cite[Equation (6)]{skeide99}.
\end{proof}

But for $k\ge 2$,  $(\mathbb{C}\mathbb{Z}^k,\varepsilon)$ does not have property (GC). It is sufficient to consider $k=2$. Since ${\rm ker}(\mu)\cong H_2(\mathbb{C}\mathbb{Z}_2,\mathbb{C})$ is spanned by
\[
c_1 = (a^{-1}-1)\otimes(b^{-1}-1) - (b^{-1}-1)\otimes
(a^{-1}-1),
\]
where $a$ and $b$ denote the two canonical generators of $\mathbb{Z}^2$, we can show that a Gaussian cocycle on $(\mathbb{C}\mathbb{Z}^k,\varepsilon)$ has a generating functional if and only if
\[
\langle\eta(a),\eta(b)\rangle \in \mathbb{R}.
\]
Therefore $(\mathbb{C}\mathbb{Z}^k,\varepsilon)$ does not have the properties (GC) or (AC) for $k\ge 2$.

\subsection{The wallpaper group \texorpdfstring{``$p2$''}{''p2''}}\label{exa-p2}

Let $G$ be the wallpaper group ``$p2$'', i.e., the subgroup of
Isom($\mathbb{R}^2$) generated by two translations $a$ and $b$ (in two
linearly independent directions) and a rotation $r$ by $180^\circ$. This group
has a presentation
\[
G = \langle a,b,r | aba^{-1}b^{-1}=r^2=(ra)^2=(rb)^2=1\rangle.
\]

\begin{proposition}
There are no non-zero Gaussian cocycles on
$(\mathbb{C}G,\varepsilon)$. Therefore $(\mathbb{C}G,\varepsilon)$ has the
properties \textnormal{(GC)} and \textnormal{(LK)}.
\end{proposition}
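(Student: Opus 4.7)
The plan is to show that any Gaussian cocycle $\eta\colon\mathbb{C}G\to D$ vanishes on the generators $a,b,r$, so that it is identically zero, and then to deduce (GC) and (LK) as easy consequences. Since $\varepsilon(g)=1$ for every $g\in G$, the derivation identity reads $\eta(gh)=\eta(g)+\eta(h)$, so $\eta|_G\colon G\to (D,+)$ is a group homomorphism. In particular $\eta(g^n)=n\,\eta(g)$ and $\eta$ factors through the abelianization $G^{\mathrm{ab}}$.

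Applying this to the three torsion relations yields
\begin{align*}
0=\eta(r^2)&=2\eta(r),\\
0=\eta\bigl((ra)^2\bigr)&=2\eta(r)+2\eta(a),\\
0=\eta\bigl((rb)^2\bigr)&=2\eta(r)+2\eta(b).
\end{align*}
Since $D$ is a complex vector space, its additive group is torsion-free, so these relations force $\eta(r)=\eta(a)=\eta(b)=0$. Hence $\eta$ vanishes on a generating set of $G$ and therefore $\eta\equiv 0$. Equivalently, $G^{\mathrm{ab}}\cong(\mathbb{Z}/2\mathbb{Z})^3$ admits no non-zero homomorphism into the torsion-free additive group of $D$.

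Property (GC) is then vacuous, since the zero cocycle is trivially completed to a Sch\"urmann triple by $\psi=0$. For (LK), let $(\pi,\eta,\psi)$ be an arbitrary Sch\"urmann triple over $(\mathbb{C}G,\varepsilon)$; after replacing $D$ by $\eta(\mathbb{C}G)$ we may assume $\eta$ is surjective. Then $\eta_G=P_G\circ\eta$ is a Gaussian cocycle in the sense above, hence $\eta_G=0$, and consequently $D_G=\eta_G(\mathbb{C}G)=\{0\}$. Thus $\eta=\eta_R$ and the splitting $\psi=0+\psi$ furnishes a L\'evy-Khinchin decomposition with $\psi_G=0$ and $\psi_R=\psi$. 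There is no real obstacle here: the whole argument reduces to the observation that the abelianization of $p2$ is finite of exponent $2$, together with the standard reduction to the surjective case.
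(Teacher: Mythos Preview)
Your proof is correct and follows essentially the same approach as the paper: both arguments reduce to the observation that a Gaussian cocycle restricts to a group homomorphism $G\to (D,+)$ and must vanish because the abelianization of $p2$ is $2$-torsion. The paper phrases this slightly more compactly by noting that $G$ is generated by the three order-two elements $r$, $ra$, $rb$, while you work with the original generators and the three torsion relations; the content is the same, and your added detail on deducing (GC) and (LK) is straightforward and correct.
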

\begin{proof}
Recall that Gaussian cocycles are simply
($\varepsilon$-$\varepsilon$-)derivations. Since we can view $G$ as generated
by the three elements $r$, $ra$, and $rb$, which have order two, there exist no
non-zero derivations on $(\mathbb{C}G,\varepsilon)$.
\end{proof}

\begin{proposition}
$(\mathbb{C}G,\varepsilon)$ has non-Gaussian cocycles which do not admit a
generating functional. Therefore $(\mathbb{C}G,\varepsilon)$ does not have the
properties \textnormal{(NC)} or \textnormal{(AC)}.
\end{proposition}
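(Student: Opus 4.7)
The plan is to exhibit a one-dimensional $*$-representation $\pi$ and a purely non-Gaussian $\pi$-$\varepsilon$-cocycle $\eta$ on $\mathbb{C}G$ for which $\mathcal{K}(\eta)$ fails to vanish on $\ker(\mu)\cong H_2(\mathbb{C}G,\mathbb{C})$; by Remark \ref{nt-remark} this will forbid the existence of any generating functional completing $(\pi,\eta)$ to a Sch\"urmann triple. With $D=\mathbb{C}$, any $\pi$-$\varepsilon$-cocycle is automatically purely non-Gaussian as soon as $\pi\neq\varepsilon$, since in that case the only candidate for $D_G$ already collapses to $\{0\}$.

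Concretely, I would take $\pi\colon\mathbb{C}G\to\mathbb{C}$ to be the character sending $a,b\mapsto 1$ and $r\mapsto -1$; all four defining relations of $G$ are visibly respected and $\pi$ is a unital $*$-representation. I then prescribe $\eta(a)=1$, $\eta(b)=i$, $\eta(r)=0$ on the generators and extend by the cocycle identity. Well-definedness on $\mathbb{C}G$ reduces to checking that $\eta$ vanishes on each relator: for each of $r^2,(ra)^2,(rb)^2$ the element $g$ has $\pi(g)=-1$, so the cocycle identity forces $\eta(g^2)=(\pi(g)+1)\eta(g)=0$; on the abelian subgroup $\langle a,b\rangle$, where $\pi$ is trivial, the cocycle restricts to a derivation and therefore kills the commutator $aba^{-1}b^{-1}$.

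To detect the obstruction I would use the element
\[
c=(a-1)\otimes_A(b-1)-(b-1)\otimes_A(a-1)\in K_1\otimes_A K_1,
\]
which lies in $\ker\mu$ because $ab=ba$ in $G$. A direct computation using $\eta(a^{-1})=-\eta(a)$ and $\eta(b^{-1})=-\eta(b)$ yields
\[
\mathcal{K}(\eta)(c)=\overline{\eta(b)}\,\eta(a)-\overline{\eta(a)}\,\eta(b)=-2i\neq 0,
\]
so Remark \ref{nt-remark} shows that no generating functional exists for $\eta$. Consequently $(\mathbb{C}G,\varepsilon)$ fails \textnormal{(NC)}, and \emph{a fortiori} \textnormal{(AC)}. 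The only mildly delicate step is the descent check for $\eta$ on $\mathbb{C}G$; every other step is immediate from the cocycle identity and the formula for $\mathcal{K}$, and the nonvanishing of $\mathcal{K}(\eta)(c)$ simultaneously confirms that $c$ is indeed a nonzero class in $\ker\mu$.
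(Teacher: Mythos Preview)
Your argument is correct and follows essentially the same route as the paper: the same sign character $\pi$ (with $\pi(a)=\pi(b)=1$, $\pi(r)=-1$), the same family of cocycles (you pick the specific values $\eta(a)=1$, $\eta(b)=i$, $\eta(r)=0$ where the paper leaves $x,y,z$ general), and the same obstruction coming from the relation $ab=ba$. The only cosmetic difference is that you phrase the obstruction via $\mathcal{K}(\eta)$ evaluated on the explicit class $c=(a-1)\otimes_A(b-1)-(b-1)\otimes_A(a-1)\in\ker\mu$ and invoke Remark~\ref{nt-remark}, whereas the paper argues directly that a hypothetical generating functional $\psi$ would force $\langle\eta(a^*),\eta(b)\rangle=\langle\eta(b^*),\eta(a)\rangle$, i.e.\ $\langle x,y\rangle\in\mathbb{R}$; these are the same computation. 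Your justification that the cocycle is purely non-Gaussian (because $\pi\neq\varepsilon$ on $D=\mathbb{C}$ forces $H_G=\{0\}$) is also fine, and in fact the paper's preceding proposition---that $(\mathbb{C}G,\varepsilon)$ admits no nonzero Gaussian cocycle---makes this automatic for \emph{any} surjective cocycle.
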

\begin{proof}
We consider the representation given by
\begin{gather*}
\pi(a)=\pi(b) = {\rm id}_D, \\
\pi(r)=-{\rm id}_D,
\end{gather*}
on a pre-Hilbert space $D$. Then there exists a unique cocycle $\eta\colon A\to D$
with
\begin{align*}
\eta(a) &= x, \\
\eta(b) &= y, \\
\eta(r) &= z,
\end{align*}
for any $x,y,z\in D$,
since
\begin{align*}
\eta(aba^{-1}b^{-1}) &= \pi(aba^{-1})\eta(b^{-1}) + \pi(ab)\eta(a^{-1}) +
\pi(a)\eta(b) +  \eta(a) \\
&= -\eta(b)-\eta(a)+\eta(b) +\eta(a) \\
&= 0, \\
\pi(r^2) &= \pi(r) \eta(r) + \eta(r) = -\eta(r)+\eta(r) = 0, \\
\pi\big((ra)^2\big) &= \pi(rar)\eta(a) + \pi(ra)\eta(r) + \pi(r)\eta(a) +
\eta(r) \\
&= \eta(a)-\eta(r) - \eta(a)+\eta(r) \\
&= 0, \\
\pi\big((rb)^2\big) &= \pi(rbr)\eta(b) + \pi(rb)\eta(r) + \pi(r)\eta(b) +
\eta(r) \\
&= \eta(b)-\eta(r) - \eta(b)+\eta(r) \\
&= 0.
\end{align*}
But such a cocycle can only admit a generating functional if
\begin{align*}
-\langle x,y\rangle &= \langle\eta(a^*),\eta(b)\rangle \\
&= L(ab)-\overline{L(a)}-L(b) \\
&= L(ba)-\overline{L(a)}-L(b) \\
&= \langle\eta(b^*),\eta(a)\rangle \\
&= -\langle y, x\rangle,
\end{align*}
i.e., if $\langle x,y\rangle\in\mathbb{R}$.
\end{proof}

\subsection{The free product of \texorpdfstring{$\mathbb{Z}^k$ with ``$p2$''}{Z with ''p2''}}\label{exa-free-pr}

Let $G$ now be the free product of the wallpaper group ``$p2$'' with $\mathbb{Z}^k$, $k\ge 2$, and consider the character $\varepsilon\colon\mathbb{C}G\to\mathbb{C}$ obtained by linear extension of the trivial representation.

\begin{proposition}
$(\mathbb{C}G,\varepsilon)$ does not have properties {\rm (GC)} or {\rm (NC)}, but it has property {\rm (LK)}.
\end{proposition}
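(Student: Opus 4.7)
The plan is to dispatch the three assertions by reducing them to the factors $\mathbb{C}[p2]$ and $\mathbb{C}\mathbb{Z}^k$. For the failures of (GC) and (NC), I would simply pull back non-admitting cocycles from the factors along the natural inclusions. For (GC), take any Gaussian cocycle $\eta_0$ on $\mathbb{C}\mathbb{Z}^k$ with $\operatorname{Im}\langle\eta_0(c_i),\eta_0(c_j)\rangle\neq 0$ for some generators $c_i,c_j$ (such exist by Subsection \ref{exa-abelian}), and extend to a derivation $\eta$ on $\mathbb{C}G$ by setting $\eta(r)=\eta(a)=\eta(b)=0$. Since $\mathbb{C}[p2]$ has no non-zero derivations (Subsection \ref{exa-p2}), the $p2$-relations impose no constraint, so $\eta$ is well-defined; the obstruction on $H_2(\mathbb{C}\mathbb{Z}^k,\mathbb{C})$ survives in $H_2(\mathbb{C}G,\mathbb{C})$, so $\eta$ admits no generating functional. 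Dually for (NC), lift the non-Gaussian pair $(\pi_0,\eta_0)$ on $\mathbb{C}[p2]$ from Subsection \ref{exa-p2} to $\mathbb{C}G$ by setting $\pi(c_i)=\operatorname{id}_D$ and $\eta(c_i)=0$ for the generators of $\mathbb{Z}^k$; the extension is still non-Gaussian (the invariant subspace is unchanged by the trivial $\mathbb{Z}^k$-action) and the $H_2(\mathbb{C}[p2],\mathbb{C})$-obstruction is inherited.

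The interesting part is (LK). The key inputs are that $\mathbb{C}[p2]$ has (LK) trivially (any Gaussian cocycle vanishes, so one can always take $\psi_G:=0$) and that $\mathbb{C}\mathbb{Z}^k$ has (LK) by Sch\"urmann's theorem for commutative algebras (Subsection \ref{exa-abelian}). Given a Sch\"urmann triple $(\pi,\eta,\psi)$ on $\mathbb{C}G$, the first observation is that $\eta_G|_{\mathbb{C}[p2]}=0$, because $\eta_G$ is a derivation and $\mathbb{C}[p2]$ has none. Combined with the free-product splitting
\[
H_2(\mathbb{C}G,\mathbb{C})\cong H_2(\mathbb{C}[p2],\mathbb{C})\oplus H_2(\mathbb{C}\mathbb{Z}^k,\mathbb{C})
\]
and the criterion of Remark \ref{nt-remark}, this means the obstruction $\mathcal{K}(\eta_G)$ on the $p2$-summand vanishes automatically, while on the $\mathbb{Z}^k$-summand one applies (LK) on $\mathbb{C}\mathbb{Z}^k$ to the restriction $\eta|_{\mathbb{C}\mathbb{Z}^k}$ (which admits $\psi|_{\mathbb{C}\mathbb{Z}^k}$ as generating functional) to produce generating functionals for the Gaussian and non-Gaussian parts at the $\mathbb{Z}^k$-level. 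One then glues the factor-wise generating functionals up to $\mathbb{C}G$ by the free-product recursion $\psi'(gh)=\psi'(g)+\psi'(h)+\langle\eta(g^{-1}),\eta(h)\rangle$ on alternating normal forms, using the fact that in a free product there are no cross-relations between the factors. Finally the freedom $(\psi_G,\psi_R)\mapsto(\psi_G+d,\psi_R-d)$ for a derivation $d$ lets one arrange $\psi=\psi_G+\psi_R$.

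The delicate point I expect to be the main obstacle is that the Gaussian--non-Gaussian decomposition of $\eta$ inside $\mathbb{C}G$ need not agree with the intrinsic decomposition of $\eta|_{\mathbb{C}\mathbb{Z}^k}$ inside $\mathbb{C}\mathbb{Z}^k$: the $G$-invariant subspace $D_G\subseteq D$ can be strictly smaller than the $\mathbb{Z}^k$-invariant subspace $D_G'$, so $\eta_R|_{\mathbb{Z}^k}$ may carry a non-trivial Gaussian component (relative to $\mathbb{Z}^k$) supported in $D_G'\ominus D_G$, while $\eta_G|_{\mathbb{Z}^k}$ is only the ``fully invariant'' piece. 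The crux of the argument will be to exploit both (LK) and (NC) on $\mathbb{C}\mathbb{Z}^k$ together to produce generating functionals for exactly the two projections $\eta_G|_{\mathbb{Z}^k}$ and $\eta_R|_{\mathbb{Z}^k}$ appearing in the gluing, rather than for the intrinsic $\mathbb{Z}^k$-decomposition. The absence of any non-trivial Gaussian cocycles on $\mathbb{C}[p2]$ should be the technical wedge that forces the two decompositions to be compatible enough for this to go through.
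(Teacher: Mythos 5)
Your treatment of the failure of (GC) and (NC) is fine and is essentially the paper's argument: the paper's entire proof consists of the single remark that $*$-representations, cocycles and Sch\"urmann triples on $\mathbb{C}G$ are determined by, and freely assembled from, their restrictions to $\mathbb{C}[p2]$ and $\mathbb{C}\mathbb{Z}^k$, and pulling the counterexamples of Subsections \ref{exa-abelian} and \ref{exa-p2} back along this correspondence is exactly how one makes the first two assertions precise.

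The (LK) part of your proposal, however, is not a proof: the ``delicate point'' you flag at the end is precisely where the argument breaks down, and it is not repaired by the absence of Gaussian cocycles on $\mathbb{C}[p2]$. Concretely, take $k=2$, write $c_1,c_2$ for the generators of $\mathbb{Z}^2$ and $a,b,r$ for those of $p2$, let $D=\mathbb{C}^2$ with orthonormal basis $e_1,e_2$, and set
\[
\pi(a)=\pi(b)=\pi(c_1)=\pi(c_2)=\operatorname{id},\qquad \pi(r)=\begin{pmatrix}1&0\\0&-1\end{pmatrix},
\]
\[
\eta(a)=\eta(b)=\eta(r)=0,\qquad \eta(c_1)=e_1+e_2,\qquad \eta(c_2)=i(e_1-e_2).
\]
Only the relations inside each free factor need to be checked, so this is a unital $*$-representation with a $\pi$-$\varepsilon$-cocycle, and $\eta$ is surjective since $\eta(c_1)$ and $\eta(rc_1)=e_1-e_2$ span $\mathbb{C}^2$. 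This cocycle admits a generating functional $\psi$: the obstruction space of Remark \ref{nt-remark} is ${\rm ker}(\mu)\cong H_2(\mathbb{C}[p2],\mathbb{C})\oplus H_2(\mathbb{C}\mathbb{Z}^2,\mathbb{C})$ with cycles that can be chosen inside the respective factors; the $p2$-obstruction vanishes because $\eta$ vanishes on $\mathbb{C}[p2]$, and the $\mathbb{Z}^2$-obstruction vanishes because $\langle\eta(c_1),\eta(c_2)\rangle=i-i=0$ is real (equivalently, both restrictions admit generating functionals and these glue by your free-product recursion). Being surjective, $(\pi,\eta,\psi)$ is, up to unitary equivalence, the triple of $\psi$, so the decomposition demanded by Definition \ref{def-lk} is the one computed from it: $D_G=\mathbb{C}e_1$, $D_R=\mathbb{C}e_2$, hence $\eta_G(c_1)=e_1$, $\eta_G(c_2)=ie_1$, $\eta_R(c_1)=e_2$, $\eta_R(c_2)=-ie_2$, and both $\pi|_{D_G}$ and $\pi_R$ restrict to the trivial representation of $\mathbb{Z}^2$. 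Because $c_1c_2=c_2c_1$, a generating functional for $\eta_G$ (resp.\ $\eta_R$) would force $\langle\eta_G(c_1),\eta_G(c_2)\rangle=i$ (resp.\ $\langle\eta_R(c_1),\eta_R(c_2)\rangle=-i$) to be real, exactly as in Subsection \ref{exa-abelian}; so neither $\psi_G$ nor $\psi_R$ exists and this $\psi$ has no L\'evy-Khinchin decomposition. The point is that $\eta_R$ is purely non-Gaussian for $G$ (since $\pi_R(r)=-\operatorname{id}$ there are no invariant vectors) while its restriction to $\mathbb{C}\mathbb{Z}^2$ is Gaussian, so neither (NC) nor (LK) of $\mathbb{C}\mathbb{Z}^k$ applies to the projections you actually need; the nonzero obstructions of $\eta_G$ and $\eta_R$ simply cancel inside the $\mathbb{Z}^2$-summand of $H_2$, which is the same mechanism as in Proposition \ref{prop-not-lk}.

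So the gap you identified is genuine and, as far as I can see, cannot be closed along the proposed lines: your gluing only yields a decomposition along the $\pi(\mathbb{Z}^k)$-fixed subspace, which in the example above is all of $D$ and differs from $D_G$. You should also be aware that the paper's own proof is nothing more than the one-sentence restriction argument you are reconstructing, so it does not address this point either; the example above appears to contradict the (LK) assertion of the proposition as stated, i.e., the defect lies in the statement/proof itself rather than in a missing trick on your side.
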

\begin{proof}
This is clear because $*$-representations, cocycles, and Sch\"urmann triples on $(\mathbb{C}G,\varepsilon)$ are uniquely determined by their restrictions to the group $*$-algebras of ``$p2$'' and $\mathbb{Z}^k$.
\end{proof}

\subsection{An example to show  \texorpdfstring{$\textnormal{(AC)} \centernot\implies {\rm (H^2Z)}$}{(AC) does not imply (H2Z)} }\label{exa-not2=0}

Consider the unital $*$-algebra 
\[A\colon=\mathbb{C}\langle x,x^*,y,y^*| x^*=x,x^2y=-y,y^*y=0\rangle\]
with the character $\varepsilon$ given on the generators by $\varepsilon(x)=\varepsilon(y)=0$.

We want to show that \textnormal{(AC)} holds.
\begin{proposition}
  Let $\pi$ be a $*$-representation of $A$ on a pre-Hilbert space $D$ and $\eta\colon{A}\to D$ a $\pi$-$\varepsilon$-$1$-cocycle. Then $\pi(y)=\pi(y^*)=0$ and $\eta(y)=\eta(y^*)=0$.
\end{proposition}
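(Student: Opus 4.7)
The plan is to extract everything from the three defining relations $x^* = x$, $x^2y = -y$, and $y^*y = 0$ by applying $\pi$ and $\eta$ to them directly.

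First, I would handle $\pi(y)$. Since $\pi$ is a $*$-representation, the relation $y^*y = 0$ gives $\pi(y)^*\pi(y) = \pi(y^*y) = 0$. For any $v \in D$ this means $\|\pi(y)v\|^2 = \langle v, \pi(y)^*\pi(y)v\rangle = 0$, so $\pi(y) = 0$ on $D$, and therefore $\pi(y^*) = \pi(y)^* = 0$.

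Next, I would handle $\eta(y)$ by applying $\eta$ to the relation $x^2y = -y$. The cocycle identity together with $\varepsilon(x) = \varepsilon(y) = 0$ yields
\[
\pi(x)^2 \eta(y) = \pi(x^2)\eta(y) + \eta(x^2)\varepsilon(y) = \eta(x^2 y) = -\eta(y).
\]
Because $x^* = x$, the operator $\pi(x)$ is self-adjoint, so $\pi(x)^2$ is positive. Taking the inner product of both sides with $\eta(y)$ gives
\[
\|\pi(x)\eta(y)\|^2 = \langle \eta(y),\pi(x)^2\eta(y)\rangle = -\langle \eta(y),\eta(y)\rangle = -\|\eta(y)\|^2.
\]
As the left side is nonnegative and the right side nonpositive, both vanish, so $\eta(y) = 0$.

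Finally, for $\eta(y^*)$ I would take the $*$-adjoint of $x^2y = -y$; using $x^* = x$ this becomes $y^*x^2 = -y^*$. Applying the cocycle identity and using $\varepsilon(x^2) = 0$ together with the first step $\pi(y^*) = 0$:
\[
-\eta(y^*) = \eta(y^* x^2) = \pi(y^*)\eta(x^2) + \eta(y^*)\varepsilon(x^2) = 0,
\]
so $\eta(y^*) = 0$. There is no real obstacle here; the only subtle point to be careful about is that the positivity argument is carried out on the pre-Hilbert space $D$ without completion, but the identity $\|\pi(y)v\|^2 = \langle v, \pi(y)^*\pi(y)v\rangle$ and the analogous inner-product computation with $\pi(x)^2$ are valid directly in $D$.
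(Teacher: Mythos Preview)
Your proof is correct and follows essentially the same approach as the paper: both use $y^*y=0$ to kill $\pi(y)$, then combine $x^2y=-y$ with self-adjointness of $\pi(x)$ to force $\|\pi(x)\eta(y)\|^2=-\|\eta(y)\|^2$, and finally use the adjoint relation $y^*x^2=-y^*$ together with $\pi(y^*)=0$ and $\varepsilon(x^2)=0$ to get $\eta(y^*)=0$. The only difference is presentational: the paper substitutes $y=-x^2y$ directly into $\langle\eta(y),\eta(y)\rangle$, while you first isolate $\pi(x)^2\eta(y)=-\eta(y)$ and then pair with $\eta(y)$.
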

\begin{proof}
The third relation yields $0=\pi(y^*y)=\pi(y)^*\pi(y)$, which implies $\pi(y)=\pi(y^*)=0$. From the first and second relation we get 
\[
0\leq\langle\eta(y),\eta(y)\rangle=\langle{\eta(y),\eta(-x^2y)}\rangle=
-\langle\pi(x)\eta(y),\pi(x)\eta(y)\rangle\le0,
\]
which implies
$\eta(y)=0$, and $\eta(y^*)=-\eta(y^*x^2)=\pi(y^*)\eta(x^2)=0$.
\end{proof}
From this proposition and the cocycle identitity we conclude, with $\pi(x)=:A$ and $\eta(x)=:v$, that $\eta(x^k)=A^kv$ and $\eta(M)=0$ for every monomial which contains either $y$ or $y^*$. We define 
\[\psi(M):=
\begin{cases}
  -\langle{v,A^{k}v}\rangle&\text{for $M=x^{k+2}, k\in\mathbb{N}$}\\
  0&\text{otherwise}
\end{cases}
\] 
(note that the relations, except $x^*=x$, all involve $y$, so they are clearly respected).
Then we obviously have $\langle\eta(M^*),\eta(N)\rangle=-\psi(MN)$ for all monomials with $\varepsilon(M)=\varepsilon(N)=0$. But, since $\eta(1)=0$, $\psi(1)=0$ and $\pi(1)=\operatorname{id}$, that is enough to have $\langle\eta(a^*),\eta(b)\rangle=\partial\psi(ab)$ for all $a,b\in{A}$. Thus, we have shown that if $\eta$ is a $\pi$-$\varepsilon$-$1$-cocycle for a $*$-representation $\pi$ on a pre-Hilbert space, then $\mathcal{L}(\eta)$ ia a coboundary, so (AC) holds.

Next we give a nontrivial 2-cocycle, which shows that $H^2\neq\{0\}$.
On the two-dimensional complex vector space $\mathbb{C}^2$ we define the non-degenerate sesquilinear hermitian form $\langle\cdot,\cdot\rangle\to\mathbb{C}$, given by the matrix \[J:=
\begin{pmatrix}
  1&0\\0&-1
\end{pmatrix},
\]
i.e., $
\langle v,w\rangle = \overline{v_1}w_1-\overline{v_2}w_2=\overline{v^t}Jw$
for $v=(v_1,v_2)^t, w=(w_1,w_2)^t\in\mathbb{C}^2$. Every linear map from $\mathbb{C}^2$ to itself is adjointable, and if $A$ is its representing $2\times 2$-matrix, then $A^\dagger:=JA^*J$ represents its adjoint. Together with the involution $\dagger$, the matrix algebra $M_2(\mathbb{C})$ becomes a unital $*$-algebra. We define a unital $*$-representation $\pi\colon A\to (M_2(\mathbb{C}),\dagger)$ on $(\mathbb{C}^2,\langle\cdot,\cdot\rangle)$ and a $\pi$-$\varepsilon$-cocycle $\eta\colon A\to\mathbb{C}^2$ by assigning
\begin{align*}
\pi(x)&=\left(\begin{array}{cc} 0 & 1 \\ -1 & 0\end{array}\right) &\pi(y)&=0&
\eta(y)&=
         \begin{pmatrix}
           1\\0
         \end{pmatrix}
&\eta(x)&=\eta(y^*)=0
\end{align*}
to the generators. The corresponding $\varepsilon$-$\varepsilon$-2-cocycle
\[
c(a\otimes b) = \mathcal{L}(\eta)(a\otimes b) = \langle \eta(a^*),\eta(b)\rangle
\]
for $a,b\in{A}$ is nontrivial: The exact sequence of Theorem~\ref{Theorem:exact-sequence-cohomology} tells us that $[c]\in H^2(A,\varepsilon)$ is the image of the corresponding linear functional $\widetilde{c}\in (K_1\otimes_A K_1)'$. Because $c(y^*\otimes y)=\langle \eta(y),\eta(y) \rangle= 1$ and $\mu(y^*\otimes y)=0$, we conclude that $\widetilde{c}\notin\operatorname{im}\mu'$. By exactness it follows that $[c]\neq 0$.

\begin{remark}
This is the only counter-example for which we could not find a group algebra. We do not know if ${\rm (H^2Z)}$ and (AC) might be equivalent under reasonable additional assumptions that are verified by group algebras, such as the existence of a faithful state.
\end{remark}

\par\bigskip\noindent
{\bf Acknowledgment.}
We acknowledge support by MAEDI/MENESR and DAAD through the PROCOPE programme.

\bibliographystyle{amsplain}

\end{document}